\def\ShowComments{}
\numberwithin{equation}{section}
\pgfplotsset{compat=newest}
\title{Regularization with Metric Double Integrals for Vector Tomography}
\author{Melanie Melching$^1$ \\{\footnotesize\href{mailto:melanie.melching@univie.ac.at}{melanie.melching@univie.ac.at}}
	\and Otmar Scherzer$^{1,2}$\\{\footnotesize\href{mailto:otmar.scherzer@univie.ac.at}{otmar.scherzer@univie.ac.at}}}
\date{\today}
\newcommand{\sphere}{\mathbb{S}^1}
\newcommand{\F}{\mathcal{F}}
\newcommand{\Reg}{\mathrm{\Phi}}
\newcommand{\op}{\mathrm{F}}
\newcommand{\radon}{\mathcal{R}}
\newcommand{\ray}{\mathcal{D}}
\newcommand{\dx}{\,\mathrm{d}x}
\newcommand{\dxy}{\,\mathrm{d}(x,y)}
\newcommand{\drp}{\,\mathrm{d}(r,\varphi)}
\newcommand{\dH}{\ \mathrm{d}\sigma}
\newcommand{\defeq}{\vcentcolon=}
\newcommand{\eqdef}{=\vcentcolon}
\newcommand{\dRM}{d_{\R^m}}
\newcommand{\dS}{d_{\sphere}}
\newcommand{\abs}[1]{\left|#1\right|}
\newcommand{\norm}[1]{\left\|#1\right\|}
\newcommand{\enorm}[1]{\left|#1\right|}
\newcommand{\set}[1]{ \left\{ #1 \right\}}
\newcommand{\level}[2]{\text{level}(#1;#2)}
\newcommand{\vd}{{v^\delta}}
\newtheorem{lemma}{Lemma}[section]
\newaliascnt{proposition}{lemma}
\newaliascnt{example}{lemma}
\newaliascnt{remark}{lemma}
\newtheorem{remark}[remark]{Remark}
\newaliascnt{notation}{lemma}
\newaliascnt{corollary}{lemma}
\newtheorem{corollary}[corollary]{Corollary}
\newaliascnt{theorem}{lemma}
\newtheorem{theorem}[theorem]{Theorem}
\newaliascnt{definition}{lemma}
\newtheorem{definition}[definition]{Definition}
\newaliascnt{assumption}{lemma}
\newtheorem{assumption}[assumption]{Assumption}
\theoremstyle{nonumberplain}
\newtheorem{proof}{Proof}
\titleformat{\section}[block]{\large\sc\filcenter}{\thesection.}{0.5ex}{}[]
\titleformat{\subsection}[runin]{\bf}{\thesubsection.}{0.5ex}{}[.]
\newcommand{\N}{\mathbb{N}}
\newcommand{\R}{\mathbb{R}}
\newcommand{\e}{\mathrm e}
\let\ii\i
\renewcommand{\i}{\mathrm i}
\renewcommand{\d}{d}
\ifdefined\ShowComments{}
  \newcommand{\commentO}[1]{\textcolor{red}{#1}}
  \newcommand{\commentM}[1]{\textcolor{orange}{#1}}
  \newcommand{\commentO}[1]{}
  \newcommand{\commentM}[1]{}
\begin{document}
\renewcommand{\sectionautorefname}{Section}
\renewcommand{\subsectionautorefname}{Subsection}
%%%%%%%%%%%%%%%%%%%%%%%%%%%%%%
%%% Titlepage
%%%%%%%%%%%%%%%%%%%%%%%%%%%%%%
\maketitle
\thispagestyle{empty}
%% ADAPT
\begin{center}
\hspace*{5em}
\parbox[t]{12em}{\footnotesize
\hspace*{-1ex}$^1$Computational Science Center\\
University of Vienna\\
Oskar-Morgenstern-Platz 1\\
A-1090 Vienna, Austria}
\hfil
\parbox[t]{17em}{\footnotesize
\hspace*{-1ex}$^2$Johann Radon Institute for Computational\\
\hspace*{1em}and Applied Mathematics (RICAM)\\
Altenbergerstraße 69\\
A-4040 Linz, Austria}
\end{center}
%%% End

  \ifdefined\ShowTableOfContents{}  \tableofcontents  \fi
  
\begin{abstract}
	We present a family of \emph{non-local} variational regularization methods for solving \emph{tomographic} problems,   
	where the solutions are functions with range in a closed subset of the Euclidean space, for example if the 
	solution only attains values in an embedded sub-manifold. Recently, in \cite{CiaMelSch19}, such regularization methods 
	have been investigated analytically and their efficiency has been tested for basic imaging tasks such as denoising and 
	inpainting. In this paper we investigate solving complex vector tomography problems with non-local variational methods both analytically and numerically.
\end{abstract}

\section{Introduction}
In this paper we study the stable solution of \emph{tomographic} imaging problems with a \emph{derivative free} variational 
regularization technique, recently introduced in \cite{CiaMelSch19}, which takes into account a-priori information that the 
function values of the solution are contained in some subset $K$ of an Euclidean space $\R^m$. Particular applications are 
tomographic reconstructions of $2D$ flow fields from acoustic time-of-flight measurements (see for instance \cite{BraHau91,Nor97}), 
in $3D$ this problem was considered in \cite{Juh92,SpaStrLinPer95}. Similar tomographic imaging problems also appear in the context of 
Doppler ultrasound imaging. Opposed to \emph{standard} tomographic imaging, consisting in inverting the Radon and the ray transform for 
\emph{intensity} valued functions (see for example \citeauthor{Nat01} \cite{Nat01} and for an overview on applications see \cite{Dea83}),
vector field tomography is much less advanced (see \cite{Sch08}). 

In contrast to \cite{CiaMelSch19}, where simple image analysis tasks, such as denoising and inpainting have been 
investigated, the focus of this paper is on solving \emph{vector tomography} problems which involve the Radon $\radon$ or the ray 
transform $\ray$, respectively. \emph{Nonlinear} imaging tasks, such as \emph{registration} 
(see for instance \cite{BauBruMic14,DroRum04, IglRumSch17, MilYou01,Mod03,PoeModSch10}, to name but a few) and 
\emph{tomographic displacement estimations} \cite{NeuPerSte18b}, fit in the framework of this paper, but are not considered here. 

All along this paper $\op$ is a subsumption for the Radon transform $\radon$ and the ray transform $\ray$, 
such that the considered tomographic imaging problem can be written in a unified manner as the operator equation 
\begin{equation} \label{eq:op}
	\op[w] = v^0
\end{equation}
on a set of functions with range in a closed subset $K \subseteq \R^m$. That is 
$$w:\Omega \subset \R^n \to K \text{ and } v^0:\Sigma := \set{ (r,\varphi):r \geq 0, \varphi \in \mathbb{S}^{n-1} } \to \R^M,$$
denotes vector-valued sinogram data, where the parameter $M$ is introduced in the following way to be able to perform a uniform analysis 
for both kind of vector tomographic problems:
\begin{align}\label{eq:M}
    M = m \text{ if } \op = \radon \text{ and }
    M = 1 \text{ if } \op = \ray.
\end{align} 
$K$ is associated with a metric $\d$, which determines an appropriate distance measure for elements of $K$, which is not necessarily 
the Euclidean distance.

We assume that noisy measurement data $\vd$ of $v^0$ are available. We avoid direct solution of \autoref{eq:op} 
and implement variational regularization methods to deal with numerical instabilities. The method of choice consists in approximating 
the solution of \autoref{eq:op} by some minimizer of the \emph{metric double integral regularization} functional with some appropriately 
chosen parameter $\alpha$, an indicator $l \in \set{ 0,1 }$ and an exponent $p \in (1,+\infty)$:
\begin{equation} \label{eq:reg}\boxed{
	\begin{aligned} 
		\F_{[d]}^{\alpha,\vd}(w) \defeq& \int_\Sigma \enorm{\op[w](r,\varphi)- \vd(r,\varphi)}^p \drp + 
		\alpha \Reg_{[\d]}^l (w), \\
		\text{ with } \Reg^l_{[\d]}(w) \defeq & 
		\int_{\Omega\times \Omega} \frac{\d^p(w(x), w(y))}{\enorm{x-y}^{n+p s}} \rho^l(x-y) \dxy.
	\end{aligned}}
\end{equation}
The particular choice of the regularization functional is motivated by the following properties of $\Reg^l_{[\dRM]}$, that is, when $\d = \dRM$ is the 
Euclidean metric. 
\begin{enumerate}
 \item For $l=0$, $p \in (1,+\infty)$ and $s \in (0,1)$ 
       \begin{equation*}
		\Phi_{[\dRM]}^0(w) \defeq \int_{\Omega\times \Omega} \frac{\enorm{w(x)-w(y)}^p}{\enorm{x-y}^{n+p s}} \dxy
       \end{equation*}
       is the \emph{fractional Sobolev semi-norm} of order $s$ to the power $p$ (see for instance \cite{Wlo82}). 
 \item Compared to the fractional Sobolev semi-norm the additional function $\rho$ in $\Reg_{[\dRM]}^1$ is beneficial 
       for numerical implementation. We choose a function $\rho$ which is rotationally symmetric around the origin, strictly 
       positive, and decays rapidly from the origin. In this way it concentrates the evaluation of the double integral to the 
       central diagonal of $\Omega \times \Omega$, while still guaranteeing that $\Reg_{[\dRM]}^1$ is an equivalent fractional Sobolev 
       semi-norm (see \autoref{le:Equiv}).
 \item For $\d = \d_\R$ and a family $(\rho_\varepsilon)_{\varepsilon > 0}$ of non-negative, 
       radially symmetric, radially decreasing mollifiers, it is shown in \cite{BouBreMir01,Pon04b} that the following relation holds
       \begin{equation} \label{eq:double_integral}
        \begin{aligned}
		\lim_{\varepsilon \searrow 0} \tilde{\Reg}_\varepsilon^1(w) & 
		\defeq \lim_{\varepsilon \searrow 0}  \int_{\Omega\times \Omega} \frac{\enorm{w(x)- w(y)}^p}{\enorm{x-y}^p} \rho_\varepsilon(x-y) \dxy\\
		&= 
		\begin{cases}
			K(p,n)\abs{w}^p_{W^{1,p}} & \mbox{if } w \in W^{1,p}(\Omega;\R), 1 < p < \infty, \\
			\infty & \mbox{otherwise}.
		\end{cases}
	\end{aligned}
\end{equation}
This relation, in particular, shows that the regularization functionals $\Reg_{[\d]}^1$ can be considered as approximate Sobolev semi-norms
of set-valued functions.

$\tilde{\Reg}_\varepsilon^1$ from \autoref{eq:double_integral} has been implemented as a regularizer in standard tomographic 
image reconstructions in \cite{AubKor09,BerTre10}. 
\end{enumerate}

The conceptual advantage of $\Reg_{[\d]}^1$, in contrast to standard Sobolev and total-variation minimization, is that the \emph{natural} 
metric of $K$ can be included in the regularization functional. We bring some examples in \autoref{ss:k}.

%-------------------------------------------------------------------------------
%\subsubsection*{Related Literature:}
%-------------------------------------------------------------------------------

This paper is organized as follows: In \autoref{sec:2} we review regularization results from \cite{CiaMelSch19}. In \autoref{sec:3} the Radon 
and ray transform, respectively, are recalled and we verify the general conditions of \cite{CiaMelSch19} 
showing that minimization of the functional $\F_{[d]}^{\alpha,\vd}$ is well--posed, stable and convergent, in the sense of a regularization 
method, also for tomographic imaging. The main objectives of the paper are to provide case studies of using $\F_{[d]}^{\alpha,\vd}$, as 
defined in \autoref{eq:reg}, for tomographic reconstructions of vector fields. Particular emphasis is devoted to analyze the effect of the 
metric $\d$ in numerical experiments (see \autoref{sec:5}). Typical differences can be observed for instance in
\autoref{fig:fig3} - \autoref{fig:fig5}, below.

\section{Regularization theory for Sobolev functions with values in a closed set}
\label{sec:2}

We start this section by making the following assumptions which are valid all along this paper: 
\begin{assumption} \label{ass:gen} 
	\mbox{}                         
	\begin{enumerate}
		\item \label{itm:setting}
		$p\in (1, +\infty)$, $s \in (0,1)$, $l \in \{0,1\}$, $\Omega \subset \R^n$ is a  nonempty, bounded and 
		connected open set with Lipschitz boundary which is compactly supported in a ball of radius $R$, $\mathcal{B}_R(0)$, 
		and $K \subseteq \R^m$ is a nonempty and closed subset of $\R^m$. 
		By $\enorm{ \cdot }$ we denote the Euclidean norm on $\R^n$ and $\R^m$, respectively.
		\item \label{itm: assmetrik}
		$\d :K \times K \rightarrow [0, +\infty)$ denotes a metric on $K$
		which is \emph{normalized equivalent} to the Euclidean distance restricted to $K \times K$, that is 
        \begin{equation}\label{eq:MI}
	      \enorm{ a-b } \leq \d(a,b) \leq C_u\enorm{a-b}, \quad a,b \in K, C_u > 0.
        \end{equation}
        Note, that the terminology normalized equivalent refers to the assumption that the lower equivalence bound in \autoref{eq:MI} is equal to one.
	\end{enumerate}
\end{assumption}

We continue by recalling definitions of spaces of functions attaining values in a closed subset $K \subseteq \R^m$, which is 
associated with a metric $d$, and we summarize some notation, which is used throughout the paper. 

First we introduce the notion of mollifiers:
\begin{definition}[Mollifier]\label{de:mol} 
 We call a non-negative, radially symmetric function $\rho \in C_c^\infty(\R^n;\R)$ satisfying $\int_{\R^n} \rho (x) \dx = 1$ 
 a \emph{mollifier}. We say that a mollifier satisfies the \emph{separation property} if for all $0 < \tau < \norm{\rho}_{L^\infty(\R^n;\R)}$ 
 there exists $\eta > 0$ such that
 \begin{equation} \label{eq:sep}
    \set{ z \in \R^n :\rho(z) \geq \tau } = \set{z \in \R^n : \enorm{z} \leq \eta }.
 \end{equation}
 This condition holds for instance if $\rho$ is a radially decreasing mollifier satisfying $\rho(0) > 0$.
\end{definition}

\begin{figure}[h!]
	\centering
	\begin{tikzpicture}
	\draw[->] (-2,0) -- (2,0);
	\node[right] at (2,0) {$x$};
	\draw[->](0,0) -- (0,3);
	%\draw[-, thick, orange](0,0) -- (0,2);
	\node[above] at (0,3) {$\rho(x)$};
	\draw[ thick, , out=0, in=180] (0,2.5) to (1.5,0);
	\draw[ thick, , out=180, in=0] (0,2.5) to (-1.5,0);
	\draw[thick, ] (1.5,0) -- (2,0);
	\draw[thick, ] (-1.5,0) -- (-2,0);	
	\draw[dashed, ] (-0.6,2) -- (0.6,2);		
	\node[right, ] at (0.7,2) {$\tau$};
	\draw[thick, ] (-0.6,0) -- (0.6,0);		
	\draw[dashed] (-0.6,0) to (-0.6,2);
	\draw[dashed] (0.6,0) to (0.6,2);
	\draw [thick,
	decoration={brace, mirror, raise=0.4cm},
	decorate
	] (-0.6,0) -- (0.6,0);
	\node[below] at (0,0){$0$};
	\node[below, ] at (0,-0.45){$2\eta$};
	
	\draw[dashed, ] (-0.6,2.5) -- (0.6,2.5);
	\node[right, ] at (0.7,2.5) {$\norm{\rho}_{L^\infty(\R;\R)}$};
	\end{tikzpicture}
	\caption{Example of a mollifier fulfilling the separation property in the case $n=1$.}
\end{figure}
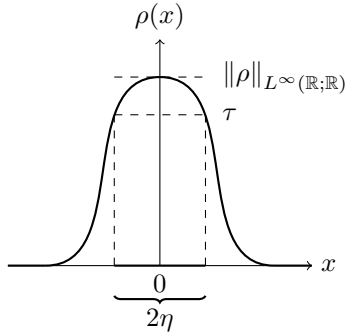
	
In the following we define sets of Sobolev functions and associated semi-norms:
\begin{definition}[Sobolev spaces of fractional order]\label{def:Spaces} Let \autoref{ass:gen} hold.
	\begin{itemize}
		\item We denote by $L^p(\Omega;\R^m)$ the \emph{Lebesgue space} of vector-valued functions.
		\item The Sobolev space $W^{1,p}(\Omega;\R^m)$ consists of all weakly differentiable functions in $L^p(\Omega;\R^m)$ for which
		\begin{equation*}
		\norm{w}_{W^{1,p}(\Omega;\R^m)} 
		\defeq \left( \norm{w}_{L^p(\Omega;\R^m)}^p +  \int_\Omega \enorm{\nabla w(x)}^p \dx \right)^{1/p} < \infty\;,
		\end{equation*}
		where $\nabla w$ is the Jacobian of $w$ and $\abs{w}_{W^{1,p}(\Omega;\R^m)}^p \defeq \int_\Omega \enorm{\nabla w(x)}^p \dx$ is the Sobolev semi-norm.
		\item The \emph{fractional Sobolev space} of order $s$ is defined (cf. \cite{Ada75}) as the set
		\begin{gather*}
		W^{s,p}(\Omega;\R^m) \defeq  \set{ w \in L^p(\Omega;\R^m) :\frac{\enorm{w(x)-w(y)}}{\enorm{x-y}^{\frac{n}{p}+s}} \in L^p (\Omega \times \Omega;\R) } 
		\end{gather*}
		equipped with the norm
		\begin{equation}\label{eq:s_norm}
		\norm{w}_{W^{s,p}(\Omega;\R^m)} \defeq \left(\norm{w}_{L^p(\Omega;\R^m)}^p + \abs{w}_{W^{s,p}(\Omega;\R^m)}^p \right)^{1/p},
		\end{equation}
		where $\abs{w}_{W^{s,p}(\Omega;\R^m)}$ is the semi-norm on $W^{s,p}(\Omega;\R^m)$, defined by
		\begin{equation}\label{eq:s_seminorm}
		\abs{w}_{W^{s,p}(\Omega;\R^m)} \defeq  
		\left(\int_{\Omega\times \Omega} \frac{\enorm{w(x)-w(y)}^p}{\enorm{x-y}^{n+ps}} \dxy \right)^{1/p}
		\quad \text{ for all } w \in W^{s,p}(\Omega;\R^m).
		\end{equation}
		\item We define the \emph{fractional Sobolev set} of order $s$ with data in $K$ as
        \begin{equation*}
            W^{s,p}(\Omega;K) \defeq \set{ w \in W^{s,p}(\Omega;\R^m) : w(x) \in K \text{ for a.e. } x \in \Omega }.
        \end{equation*}
        The \emph{Lebesgue set} with data in $K$ is defined as
			\begin{equation*}
				L^p(\Omega;K)  \defeq \set{ w \in L^p(\Omega;\R^m) : w(x) \in K \text{ for a.e. } x \in \Omega}.
			\end{equation*}
\end{itemize}
\end{definition}
Note that $L^p(\Omega;K)$ and $W^{s,p}(\Omega;K)$ are sets and {\bf not} linear spaces because summation of elements in $K$ is typically not closed in $K$.

The proofs of the following lemma can be found in \cite{CiaMelSch19}, which are adaptations of embedding theorems  from \cite{Ada75}
for Sobolev spaces of intensity functions (that is mapping to $\R$). 

\begin{lemma} \label{lem:inclusion} Let \autoref{ass:gen} be satisfied.
	Then
	
	$\bullet$ \emph{ Compact embedding:}
	$W^{s,p}(\Omega;\R^m) \subseteq L^p(\Omega;\R^m)$ and the embedding is compact, meaning that every bounded 
	sequence in $W^{s,p}(\Omega;\R^m)$ has a convergent subsequence in $L^p(\Omega;\R^m)$. Moreover, the space $W^{s,p}(\Omega;\R^m)$ is reflexive.
	
	$\bullet$\emph{ Sequential closedness of {$W^{s,p}(\Omega;K)$} and {$L^p(\Omega;K)$}:} 
	Let $w_* \in W^{s,p}(\Omega;\R^m)$ and $(w_k)_{k\in \N}$ be a sequence in $W^{s,p}(\Omega;K) \subseteq W^{s,p}(\Omega;\R^m)$ with $w_k \rightharpoonup w_*$ weakly in $W^{s,p}(\Omega;\R^m)$. 
	Then, $w_* \in W^{s,p}(\Omega;K)$ and $w_k \rightarrow w_*$ (strongly) in $L^p(\Omega;K) \subseteq L^p(\Omega;\R^m)$. Moreover, there exists a subsequence 
	$(w_{k_j})_{j \in \N}$ converging to $w_*$ pointwise almost everywhere, that is $w_{k_j}(x) \to w_*(x)$ as $j \to \infty$ 
	for almost every $x \in \Omega$.
\end{lemma}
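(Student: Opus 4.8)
The plan is to deduce both assertions from the scalar fractional Rellich--Kondrachov theorem (the adaptation of \cite{Ada75} referred to above) together with elementary Banach space theory and, crucially, the mere closedness of $K$.

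First I would treat the compact embedding and reflexivity by reducing to the scalar case componentwise. Writing $w = (w_1,\dots,w_m)$, the identity $\enorm{w(x)-w(y)}^2 = \sum_{i=1}^m \enorm{w_i(x)-w_i(y)}^2$ together with the equivalence of the Euclidean and $\ell^p$ norms on $\R^m$ shows that $w \in W^{s,p}(\Omega;\R^m)$ if and only if each $w_i \in W^{s,p}(\Omega;\R)$, with $\norm{w}_{W^{s,p}(\Omega;\R^m)}$ equivalent to the combined component norms. Given a bounded sequence in $W^{s,p}(\Omega;\R^m)$, each of the $m$ component sequences is then bounded in $W^{s,p}(\Omega;\R)$; since $\Omega$ is a bounded Lipschitz domain, the scalar embedding into $L^p(\Omega;\R)$ is compact, so a finite iteration of subsequence extractions over the components produces a subsequence converging in $L^p$ in every component, hence in $L^p(\Omega;\R^m)$. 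For reflexivity I would use that the Gagliardo map
\[
 w \mapsto \left( w,\ \frac{w(x)-w(y)}{\enorm{x-y}^{n/p+s}} \right)
\]
embeds $W^{s,p}(\Omega;\R^m)$ isometrically into $L^p(\Omega;\R^m)\times L^p(\Omega\times\Omega;\R^m)$; for $1<p<\infty$ this product is reflexive, and since $W^{s,p}(\Omega;\R^m)$ is complete its isometric image is a closed subspace of a reflexive space, hence reflexive.

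For the closedness statement I would argue as follows. Weak convergence $w_k \rightharpoonup w_*$ in $W^{s,p}(\Omega;\R^m)$ forces $(w_k)$ to be bounded, so by the compact embedding just established it has a subsequence converging strongly in $L^p(\Omega;\R^m)$; uniqueness of limits (strong and weak $L^p$-limits must agree with the weak $W^{s,p}$-limit) identifies this limit as $w_*$. As the same reasoning applies to every subsequence, the full sequence satisfies $w_k \to w_*$ in $L^p(\Omega;\R^m)$. Strong $L^p$-convergence then yields a further subsequence $(w_{k_j})$ with $w_{k_j}(x) \to w_*(x)$ for almost every $x \in \Omega$.

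Finally, since $w_{k_j}(x)\in K$ for almost every $x$ and all $j$, and $K$ is closed, the pointwise limit satisfies $w_*(x)\in K$ for almost every $x$; combined with the hypothesis $w_* \in W^{s,p}(\Omega;\R^m)$ this gives $w_* \in W^{s,p}(\Omega;K)$ and $w_* \in L^p(\Omega;K)$. I expect the conceptual crux to be exactly this last step: because $K$ is only assumed closed and \emph{not} convex, membership in $K$ need not survive a weak limit via a Mazur-type convexity argument, so it is essential to route through pointwise almost-everywhere convergence along a subsequence. The compact embedding is precisely what makes such a subsequence available, while the scalar fractional Rellich theorem and the reflexivity of $L^p$ enter only as standard black boxes.
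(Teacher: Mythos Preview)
Your proposal is correct and matches what the paper indicates: the paper does not give its own proof here but defers to \cite{CiaMelSch19}, noting only that the arguments are adaptations of the scalar embedding theorems in \cite{Ada75}. Your componentwise reduction to the scalar Rellich--Kondrachov theorem, the Gagliardo isometric embedding for reflexivity, and the route from compact embedding through pointwise a.e.\ convergence to exploit closedness of $K$ are exactly such an adaptation.
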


The regularizer $\Reg_{[\dRM]}^l$ is \emph{p-homogeneous} and \emph{sub-additive}. The proof of this is analogous to the 
proof of Minkowski's inequality.
\begin{remark}\label{re:RP} Let \autoref{ass:gen} hold, then for all $v,w \in W^{s,p}(\Omega;\R^m)$ and 
	$\lambda \in \R^m$ 
	\begin{equation}\label{eq:RegProperties}
	\Reg_{[\dRM]}^l(\lambda w) = \enorm{\lambda}^p \Reg_{[\dRM]}^l(w)
	\quad \text{ and } \quad
	(\Reg_{[\dRM]}^l(v + w))^{\frac{1}{p}} \leq (\Reg_{[\dRM]}^l(v))^{\frac{1}{p}} + (\Reg_{[\dRM]}^l(w))^{\frac{1}{p}}.
	\end{equation}
	In the second relation equality only holds if $u$ and $v$ are linearly dependent, equal up to a constant, $v=0$ or $w=0$, respectively. 
\end{remark} 

In the following we state a Poincare type inequality for the regularization functional from \autoref{eq:reg}: 
\begin{lemma}\label{le:PoL}
	Let \autoref{ass:gen} hold. Then there exists a constant $C_P > 0 $ such that for all $w \in W^{s,p}(\Omega;K) \subseteq W^{s,p}(\Omega;\R^m)$ 
	the following holds:
	\begin{equation} \label{eq:PT}
	\norm{w-\overline{w}}_{L^p(\Omega;\R^m)}^p \leq C_P \Reg_{[\d]}^l(w) \text{ where } 
	\overline{w} \defeq \frac{1}{|\Omega|} \begin{pmatrix} 
                                            \int_\Omega w_1(x) \dx\\
                                            \vdots \\
                                            \int_\Omega w_m(x) \dx
	                                       \end{pmatrix}
    \end{equation}
	is the component-wise mean average.
\end{lemma}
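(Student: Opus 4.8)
The plan is to exploit the lower bound of the normalized equivalence in \autoref{eq:MI}, namely $\enorm{a-b}\le\d(a,b)$, which gives $\enorm{w(x)-w(y)}^p\le\d^p(w(x),w(y))$ pointwise. Since every remaining factor in the integrand of $\Reg^l_{[\d]}$ is non-negative, this yields $\Reg^l_{[\d]}(w)\ge\Reg^l_{[\dRM]}(w)$, so it suffices to prove the inequality with the Euclidean regularizer $\Reg^l_{[\dRM]}$ on the right-hand side. The metric thus enters only through this single monotonicity step, and only the lower equivalence bound (not $C_u$) is needed.

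For $l=0$ the argument is elementary. Writing $w(x)-\overline w=\frac{1}{\abs{\Omega}}\int_\Omega(w(x)-w(y))\,\mathrm{d}y$ and applying Jensen's inequality to the convex map $\xi\mapsto\enorm{\xi}^p$ gives $\enorm{w(x)-\overline w}^p\le\frac{1}{\abs{\Omega}}\int_\Omega\enorm{w(x)-w(y)}^p\,\mathrm{d}y$; integrating in $x$ and using $\enorm{x-y}\le 2R$ on $\Omega\subset\mathcal{B}_R(0)$ to bound $1\le(2R)^{n+ps}\enorm{x-y}^{-(n+ps)}$, I obtain
\begin{equation*}
\norm{w-\overline w}_{L^p(\Omega;\R^m)}^p\le\frac{1}{\abs{\Omega}}\int_{\Omega\times\Omega}\enorm{w(x)-w(y)}^p\dxy\le\frac{(2R)^{n+ps}}{\abs{\Omega}}\,\abs{w}_{W^{s,p}(\Omega;\R^m)}^p=\frac{(2R)^{n+ps}}{\abs{\Omega}}\,\Reg^0_{[\dRM]}(w),
\end{equation*}
so that $C_P=(2R)^{n+ps}/\abs{\Omega}$ works. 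The point that trivialises this case is that the kernel $\enorm{x-y}^{-(n+ps)}$ is bounded below on the bounded set $\Omega\times\Omega$.

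For $l=1$ this is precisely what fails: the weight $\rho(x-y)$ vanishes once $x$ and $y$ are far apart, so the double integral controls only near-diagonal oscillations and the estimate becomes a genuine local-to-global (Poincaré–Wirtinger) statement. This is the main obstacle. I would first localize the weight: by the separation property of \autoref{de:mol} there exist $c_0,\eta_0>0$ with $\rho\ge c_0$ on $\mathcal{B}_{\eta_0}(0)$, and hence, bounding $\rho\ge c_0$ on $\enorm{x-y}\le\eta_0$ and extending the non-negative integrand back to all of $\Omega\times\Omega$,
\begin{equation*}
\int_{\{(x,y)\in\Omega\times\Omega\,:\,\enorm{x-y}\le\eta_0\}}\frac{\enorm{w(x)-w(y)}^p}{\enorm{x-y}^{n+ps}}\dxy\le\frac{1}{c_0}\,\Reg^1_{[\dRM]}(w).
\end{equation*}
It then remains to dominate $\norm{w-\overline w}_{L^p(\Omega;\R^m)}^p$ by this near-diagonal energy. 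For that I would cover $\overline\Omega$ by finitely many balls of radius less than $\eta_0/2$ whose overlap graph is connected (available since $\Omega$ is bounded, connected and Lipschitz): on each ball the $l=0$ computation above bounds the $L^p$-deviation from the \emph{local} mean (all such pairs satisfy $\enorm{x-y}<\eta_0$), and on a positive-measure overlap the two local means are close, controlled again by the local energy. Connectedness of the overlap graph then propagates control along chains to every ball and finally compares each local mean to the global mean $\overline w$. The bookkeeping of the chaining constants is routine; the genuine content is the geometric covering/connectedness input.

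Alternatively, if the seminorm equivalence $\Reg^1_{[\dRM]}(w)\ge c_1\,\abs{w}_{W^{s,p}(\Omega;\R^m)}^p$ from \autoref{le:Equiv} is available, the $l=1$ case reduces at once to the $l=0$ bound, yielding $C_P=(2R)^{n+ps}/(c_1\abs{\Omega})$. Note that this lower direction of the equivalence is itself the local-to-global statement, so the two routes coincide in substance, and in either case the resulting $C_P$ depends on $n,p,s,R,\abs{\Omega}$ and, for $l=1$, additionally on $\rho$ and the covering geometry.
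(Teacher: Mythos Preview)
Your reduction from a general metric $\d$ to the Euclidean regularizer via the lower bound in \autoref{eq:MI} is exactly what the paper does. From there, however, the routes diverge: the paper (through the reference to \cite{CiaMelSch19}) proves the Euclidean case by the classical Evans--style contradiction argument using the compact embedding $W^{s,p}\hookrightarrow L^p$ (normalize a violating sequence, pass to a subsequence, obtain a constant limit, contradiction), treating $l=0$ and $l=1$ uniformly and non-constructively. Your argument is instead constructive: for $l=0$ the Jensen/diameter computation is correct and yields an explicit $C_P$, and for $l=1$ the covering--chaining sketch is a valid and standard way to pass from near-diagonal control to a global Poincar\'e estimate. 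What you gain is explicit constants and no appeal to compactness; what the paper's route gains is brevity and a single argument for both values of $l$.

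One caveat on your ``alternative'' via \autoref{le:Equiv}: that lemma gives an equivalence of \emph{norms}, i.e.\ $\norm{w}_{W^{s,p}}^p\le\overline{C}\bigl(\norm{w}_{L^p}^p+\Reg^1_{[\d]}(w)\bigr)$, not the pure \emph{seminorm} bound $\abs{w}_{W^{s,p}}^p\le c^{-1}\Reg^1_{[\dRM]}(w)$ that you invoke. The extra $\norm{w}_{L^p}^p$ term prevents a direct reduction of $l=1$ to your $l=0$ computation. You already hint at this (``this lower direction of the equivalence is itself the local-to-global statement''), and indeed the seminorm comparison would require the same chaining content as your main argument, so the alternative is not an independent shortcut.
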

In the case $\d=\dRM$ is the Euclidean metric the proof of the lemma is analogous to the proof of \cite[Lemma 3.8]{CiaMelSch19}, which in turn 
is based on the ideas of the proof of Poincaré's inequality in \cite{Eva10}. For general $\d$ we use \autoref{eq:MI} which implies $\Reg_{[\dRM]} \leq \Reg_{[\d]}$.

The subsequent lemma shows that indeed the fractional Sobolev norm (defined in \autoref{eq:s_norm}) and 
{$\Reg_{[\d]}^1(\cdot) + \norm{\cdot}^p_{L^p(\Omega;\R^m)}$, with $\Reg_{[\d]}^1(\cdot)$ defined in \autoref{eq:reg}, are equivalent. 
This will be used later on to prove coercivity of the functional $\F_{[d]}^{\alpha,\vd}$. The statement of the result is rather trivial, when 
the mollifier $\rho$ is uniformly positive (that is it satisfies $0 < \rho_1 \leq \rho(x) \leq \rho_2 < \infty$ for $x \in \Omega$), 
however it also holds when the mollifier has compact support in $\Omega$. 

\begin{lemma}\label{le:Equiv} Let \autoref{ass:gen} hold and assume that the index is $l=1$. Moreover, assume that $\rho$ is a 
    mollifier, which satisfies the separation property \autoref{eq:sep}.
	Then, there exist constants $0 < \underline{C} \leq \overline{C}$ such that for all $w \in W^{s,p}(\Omega;K) \subseteq W^{s,p}(\Omega;\R^m)$
	\begin{equation}\label{eq: Equivalence}
	\underline{C} \left( \norm{w}^p_{L^p(\Omega;\R^m)} + \Reg_{[\d]}^1(w)  \right) 
	\leq \norm{w}^p_{W^{s,p}(\Omega;\R^m)} \leq \overline{C} \left( \norm{w}^p_{L^p(\Omega;\R^m)} + \Reg_{[\d]}^1(w) \right).
	\end{equation}
\end{lemma}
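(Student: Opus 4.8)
The plan is to prove the two inequalities in \autoref{eq: Equivalence} separately, using in each direction the two-sided comparison \autoref{eq:MI} between $\d$ and the Euclidean distance, together with the boundedness and the separation property of $\rho$. Throughout I recall that $\norm{w}^p_{W^{s,p}(\Omega;\R^m)} = \norm{w}^p_{L^p(\Omega;\R^m)} + \abs{w}^p_{W^{s,p}(\Omega;\R^m)}$ by \autoref{eq:s_norm}, so it suffices to compare the fractional seminorm $\abs{w}^p_{W^{s,p}(\Omega;\R^m)}$ with $\Reg_{[\d]}^1(w)$ modulo the common $L^p$-term.

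First I would dispose of the left inequality, which is the elementary direction. Since $\rho(x-y) \leq \norm{\rho}_{L^\infty(\R^n;\R)}$ pointwise and $\d(a,b) \leq C_u \enorm{a-b}$ by \autoref{eq:MI}, the integrand defining $\Reg_{[\d]}^1$ is bounded by $C_u^p \norm{\rho}_{L^\infty(\R^n;\R)} \, \enorm{w(x)-w(y)}^p \enorm{x-y}^{-(n+ps)}$, whence $\Reg_{[\d]}^1(w) \leq C_u^p \norm{\rho}_{L^\infty(\R^n;\R)} \, \abs{w}^p_{W^{s,p}(\Omega;\R^m)}$. Adding $\norm{w}^p_{L^p(\Omega;\R^m)}$ to both sides then gives the left inequality with $\underline{C}^{-1} = \max\set{1,\, C_u^p \norm{\rho}_{L^\infty(\R^n;\R)}}$.

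The right inequality is the substantial one, because $\rho$ has compact support and hence $\Reg_{[\d]}^1$ only controls the behaviour of $w$ near the diagonal of $\Omega\times\Omega$, whereas $\abs{\cdot}_{W^{s,p}(\Omega;\R^m)}$ integrates over all of $\Omega\times\Omega$. To reconcile the two, I would fix any $0 < \tau < \norm{\rho}_{L^\infty(\R^n;\R)}$, take the radius $\eta > 0$ supplied by the separation property \autoref{eq:sep}, and split the domain into the near-diagonal part $\set{(x,y)\in\Omega\times\Omega : \enorm{x-y} \leq \eta}$ and its complement. On the near-diagonal part the separation property yields $\rho(x-y) \geq \tau$, so using $\enorm{w(x)-w(y)} \leq \d(w(x),w(y))$ from \autoref{eq:MI} and then extending the integral back to all of $\Omega\times\Omega$ (the integrand being nonnegative) bounds this portion by $\tau^{-1}\Reg_{[\d]}^1(w)$. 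On the complement the kernel is no longer singular, since $\enorm{x-y}^{-(n+ps)} \leq \eta^{-(n+ps)}$ there; estimating $\enorm{w(x)-w(y)}^p \leq 2^{p-1}(\enorm{w(x)}^p + \enorm{w(y)}^p)$ and integrating over the bounded set $\Omega\times\Omega$ bounds this portion by $2^p \abs{\Omega} \eta^{-(n+ps)} \norm{w}^p_{L^p(\Omega;\R^m)}$. Summing and adding $\norm{w}^p_{L^p(\Omega;\R^m)}$ on both sides gives the right inequality with $\overline{C} = \max\set{1 + 2^p\abs{\Omega}\eta^{-(n+ps)},\ \tau^{-1}}$.

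The hard part is the near-diagonal estimate: this is exactly where the separation property is indispensable, as it is what guarantees that $\rho$ stays bounded below by $\tau$ on a full ball around the origin, so that the compactly supported weight still dominates the singularity $\enorm{x-y}^{-(n+ps)}$ of the fractional kernel; mere positivity of $\rho$ at the origin would not suffice. Everything else is a routine splitting argument combined with \autoref{eq:MI}, and one should only verify that $\eta$, and therefore both $\underline{C}$ and $\overline{C}$, depend only on $\rho$, $p$, $s$, $n$, $C_u$ and $\abs{\Omega}$, and not on $w$.
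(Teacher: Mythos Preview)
Your proposal is correct and follows essentially the same approach as the paper: the same near-diagonal/far-from-diagonal splitting via the separation property for the upper bound, and the same trivial use of $\norm{\rho}_{L^\infty}$ for the lower bound, yielding the same constants. The only cosmetic difference is that you fold the comparison \autoref{eq:MI} between $\d$ and $\dRM$ directly into each direction, whereas the paper first proves the inequality for $\d=\dRM$ and then adjusts the constants afterward.
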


\begin{proof}
	We prove the result for $\d = \dRM$; for general $\d$ the statement follows using \autoref{eq:MI} and using instead of the constant 
	$\overline{C}$ the constant $\overline{C} \max\{1,C_u\}$.
	To show the upper bound in \autoref{eq: Equivalence} we use the same method as in \cite[Lemma 3.7 and Lemma 3.8]{CiaMelSch19} and split $\Omega \times \Omega$ into a set $\mathcal{S}$ consisting of the points close to the central diagonal of $\Omega \times \Omega$ and its complement $\mathcal{S}^c$. \\
	The separation property of $\rho$, \autoref{eq:sep}, ensures that for every $0 < \tau < \norm{\rho}_{L^\infty(\R^n;\R)}$ there exists $\eta  > 0$ such that 
	\begin{equation*} \begin{aligned}
	\mathcal{S} \defeq \set{ (x,y) \in \Omega \times \Omega :\rho(x-y) \geq \tau} 
	= \set{ (x,y) \in \Omega \times \Omega :\enorm{x-y} \leq \eta  }. 
	\end{aligned} \end{equation*}
	We obtain from Jensen's inequality that for all $w \in W^{s,p}(\Omega;K) \subseteq W^{s,p}(\Omega;\R^m)$ 
	\begin{equation*} \begin{aligned}
	\abs{w}^p_{W^{s,p}(\Omega;\R^m)} 
	& = \int_{\mathcal{S}} \frac{\enorm{w(x)-w(y)}^p}{\enorm{x-y}^{n+p s}} \dxy
	+ \int_{\mathcal{S}^c} \frac{\enorm{w(x)-w(y)}^p}{\enorm{x-y}^{n+p s}} \dxy \\
	& \leq \frac{1}{\tau} \int_{\mathcal{S}} \frac{\enorm{w(x)-w(y)}^p}{\enorm{x-y}^{n+p s}} \rho(x-y) \dxy
	+ \frac{2^p \abs{\Omega}}{\eta^{n+p s}}\norm{w}^p_{L^p(\Omega;\R^m)} \\
	& \leq \frac{1}{\tau}\Reg_{[\dRM]}^1(w) + \frac{2^p \abs{\Omega} }{\eta^{n+p s}}\norm{w}^p_{L^p(\Omega;\R^m)}.
	\end{aligned} \end{equation*}
	With $\overline{C} \defeq \max \set{ \frac{1}{\tau}, 1+ \frac{2^p \abs{\Omega} }{\eta^{n+p s}} }$ we get the upper inequality of 
	\autoref{eq: Equivalence}. 
	To prove the lower bound note that since the mollifier $\rho$ satisfies \autoref{de:mol} and $\overline{\Omega}$ is bounded, 
	there exists a constant $\rho_2 > 0$ such that $\norm{\rho}_{L^\infty(\R^n;\R)} \leq \rho_2$.
	
	Then we calculate that
	\begin{equation*}
	\norm{w}_{W^{s,p}(\Omega;\R^m)}^p \geq \norm{w}^p_{L^p(\Omega;\R^m)} + \frac{1}{\rho_2}\Reg_{[\dRM]}^1(w) \geq \min  \set{  1, \frac{1}{\rho_2}  }\left( \norm{w}^p_{L^p(\Omega;\R^m)} + \Reg_{[\dRM]}^1(w) \right).
	\end{equation*}
	Defining $\underline{C} \defeq \min  \set{1, \frac{1}{\rho_2} }$ finishes the proof.
\end{proof}

The choice of the metric $\d$ influences properties of the regularizer, as for instance invariance properties.
\begin{remark}[Invariances]
In the numerical examples in \autoref{sec:5} below we consider the regularizer $\Reg_{[\dS]}^1$ and compare it with the (vectorial) Sobolev 
semi-norm regularizer $\Theta$, defined by,
\begin{equation*}
w \in W^{1,p}(\Omega;\R^m) \to \Theta(w) \defeq \int_\Omega \enorm{\nabla w(x)}_F^p \dx, 
\end{equation*}
where $\enorm{\nabla w(x)}_F$ denotes the Frobenius-norm of the matrix $\nabla w(x)$.

$\Reg_{[\dS]}^1$ is rotation invariant, that is, 
\begin{equation*}
\Reg_{[\dS]}^1(\mathcal{O} w) = \Reg_{[\dS]}^1(w)  \text{ for all } \mathcal{O} = 
\begin{pmatrix}
\cos(\gamma) & -\sin(\gamma) \\
\sin(\gamma) & \cos(\gamma)
\end{pmatrix}, w \in W^{s,p}(\Omega, \sphere) \subset W^{1,p}(\Omega, \R^2),
\end{equation*}
while $\Theta(w)$ is shift invariant: That is, for all $c \in \R^2$,
\begin{equation*}
\Theta(w+c) = \Theta(w) \text{ for all } w \in W^{1,p}(\Omega;\R^2).
\end{equation*}
Both regularizers are reflection invariant, that is
\begin{equation*}
\Reg_{[\dS]}^1(- w) = \Reg_{[\dS]}^1(w), \quad \Theta(-w) = \Theta(w) \text{ for all } w \in W^{1,p}(\Omega;\sphere) \subset W^{1,p}(\Omega, \R^2). 
\end{equation*}
\end{remark}

The next paragraph reviews regularization theory with double integral regularization 
functionals for functions with values in $K$ from \cite{CiaMelSch19}. Their analysis in turn 
is based on the regularization analysis of \cite{SchGraGroHalLen09,SchuKalHofKaz12}. Below, we show 
that the Radon $\radon$ and the ray transform $\ray$, respectively, satisfy the general assumptions 
of the results posted in \cite{CiaMelSch19}.

We first formulate some abstract conditions on the operator $\op$, the data $v^0$ and $\vd$, and the functional $\F_{[d]}^{\alpha,\vd}$, defined in \autoref{eq:reg}. For the definition of the constant $M$ associated to $\op$ see \autoref{eq:M}.
\begin{assumption} \label{ass:2} Let \autoref{ass:gen} hold. 
	Moreover, let $v^0 \in L^p(\Sigma;\R^M)$ and let $\rho$ be a mollifier, which satisfies the separation property \autoref{eq:sep}. 
	We assume that 
	\begin{enumerate}
		\item $\op:W^{s,p}(\Omega;K) \to L^p(\Sigma;\R^M)$ is well--defined and sequentially continuous with respect to the weak topology on 
		$W^{s,p}(\Omega;\R^m)$, that is if $(w_n)_{n\in \N}$ is a sequence in $W^{s,p}(\Omega;K)$ converging weakly to 
		$w^* \in W^{s,p}(\Omega;K)$ (with respect to $W^{s,p}(\Omega;\R^m)$), then it holds that $\op[w_n] \rightarrow \op[w^*]$ 
		strongly in $L^p(\Sigma;\R^M)$.\label{itm:F1}
		\item	 
		For every $t > 0$ and $\alpha > 0$ the level sets
		\begin{equation*}
			\text{level}(\F^{\alpha,v^0}_{[d]};t) \defeq \set{  w \in W^{s,p}(\Omega;K):\  \F^{\alpha,v^0}_{[d]}(w) \leq t  }  
		\end{equation*}
		are weakly sequentially pre-compact in $W^{s,p}(\Omega;\R^m)$. \label{itm:F2}
		\item	  
		There exists $\bar{t} > 0$ such that $\text{level}_{\bar{t}}(\F^{\alpha,v^0}_{[d]})$ is nonempty. \label{itm:F3}
	\end{enumerate}
\end{assumption}
Moreover, we make the following definition.
\begin{definition}\label{def:MNS} Let \autoref{ass:2} hold.  
	Every element $w^* \in W^{s,p}(\Omega;K)$ satisfying
	\begin{equation*}
	    \boxed{
        \Reg_{[\d]}^l(w^*) = \inf \set{ \Reg_{[\d]}^l (w):\ w \in W^{s,p}(\Omega;K), \op[w] = v^0 },}
	\end{equation*}
	is called a 
	\emph{$\Reg_{[\d]}^l$-minimizing solution} of \autoref{eq:op}.
\end{definition}
Under basic assumptions the existence of a $\Reg_{[\d]}^l$-minimizing solution is guaranteed. 
The proof is analogous to the proof of \cite[Lemma 3.2]{SchGraGroHalLen09}. 
\begin{lemma}\label{lem:MNS}
	Let \autoref{ass:2} hold and assume that there exists a solution of \autoref{eq:op} in $W^{s,p}(\Omega;K)$. 
	Then there exists a $\Reg_{[\d]}^l$-minimizing solution.
\end{lemma}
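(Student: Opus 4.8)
The plan is to apply the direct method of the calculus of variations to the constrained problem defining a $\Reg_{[\d]}^l$-minimizing solution. First I would set
$c \defeq \inf \set{ \Reg_{[\d]}^l(w) : w \in W^{s,p}(\Omega;K),\ \op[w] = v^0 }$.
By hypothesis the admissible set is nonempty, and on it $\Reg_{[\d]}^l$ is finite (by \autoref{eq:MI} one has $\Reg_{[\d]}^l \leq C_u^p\,\Reg_{[\dRM]}^l$, which is finite on $W^{s,p}(\Omega;K)$), so $c \in [0,+\infty)$. Hence I can choose a minimizing sequence $(w_k)_{k\in\N}$ in $W^{s,p}(\Omega;K)$ with $\op[w_k] = v^0$ and $\Reg_{[\d]}^l(w_k) \to c$.

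The first substantial step is to extract a weakly convergent subsequence. Since each $w_k$ satisfies the constraint exactly, the data-fidelity term of $\F^{\alpha,v^0}_{[d]}$ vanishes, so $\F^{\alpha,v^0}_{[d]}(w_k) = \alpha\,\Reg_{[\d]}^l(w_k)$, which is bounded because it converges to $\alpha c$. Thus for a suitable $t > 0$ the whole sequence lies in $\level{\F^{\alpha,v^0}_{[d]}}{t}$, which by \autoref{ass:2}~\textit{(ii)} is weakly sequentially pre-compact in $W^{s,p}(\Omega;\R^m)$. I therefore pass to a subsequence (not relabelled) with $w_k \rightharpoonup w^*$ weakly in $W^{s,p}(\Omega;\R^m)$.

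Next I would check that the weak limit is admissible. The sequential closedness part of \autoref{lem:inclusion} gives $w^* \in W^{s,p}(\Omega;K)$, strong convergence $w_k \to w^*$ in $L^p(\Omega;\R^m)$, and a further subsequence converging pointwise almost everywhere on $\Omega$. Weak sequential continuity of $\op$ from \autoref{ass:2}~\textit{(i)} yields $\op[w_k] \to \op[w^*]$ strongly in $L^p(\Sigma;\R^M)$; since $\op[w_k] = v^0$ for every $k$, the limit satisfies $\op[w^*] = v^0$. Hence $w^*$ is admissible, and in particular $\Reg_{[\d]}^l(w^*) \geq c$ by definition of the infimum.

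It remains to establish weak lower semicontinuity of the regularizer along the sequence, which I regard as the crux of the argument. Along the pointwise a.e. convergent subsequence the integrand $\d^p(w_k(x),w_k(y))\,\enorm{x-y}^{-(n+ps)}\rho^l(x-y)$ converges pointwise a.e. on $\Omega\times\Omega$, because $\d$ is continuous (being equivalent to the Euclidean distance by \autoref{eq:MI}) and the integrand is nonnegative. Fatou's lemma then gives $\Reg_{[\d]}^l(w^*) \leq \liminf_k \Reg_{[\d]}^l(w_k) = c$. Combined with $\Reg_{[\d]}^l(w^*) \geq c$ from admissibility, this forces $\Reg_{[\d]}^l(w^*) = c$, so $w^*$ is a $\Reg_{[\d]}^l$-minimizing solution. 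The only delicacy to watch is that the pointwise a.e. convergence needed for Fatou is guaranteed only along a subsequence; since $\Reg_{[\d]}^l(w_k) \to c$ for the full sequence, the $\liminf$ over any subsequence still equals $c$, so no information is lost in passing down to it.
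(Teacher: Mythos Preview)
Your proof is correct and follows the standard direct-method argument; the paper does not spell out its own proof but refers to \cite[Lemma 3.2]{SchGraGroHalLen09}, whose strategy is exactly the one you execute (minimizing sequence, pre-compactness of level sets, constraint preservation via weak sequential continuity of $\op$, lower semicontinuity of the regularizer). Your use of \autoref{ass:2}~\ref{itm:F2} to obtain boundedness, \autoref{lem:inclusion} for closedness and a.e.\ convergence, and Fatou for lower semicontinuity of $\Reg_{[\d]}^l$ is precisely the intended route.
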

According to \cite{CiaMelSch19} we now have the following result.
\begin{theorem}\label{th:ex} 
  Let \autoref{ass:2} hold. Moreover, we assume that there exists a solution of 
   \autoref{eq:op} (\autoref{lem:MNS} then guarantees the existence of a $\Reg_{[\d]}^l$-minimizing solution $w^\dagger \in W^{s,p}(\Omega;K)$).
   Then the following results hold:
	\begin{description} 	 
		\item{\emph{Existence:}} For every $v \in L^p(\Sigma;\R^M)$ and $\alpha > 0$ the functional 
		 $\F_{[d]}^{\alpha,v}: W^{s,p}(\Omega;K) \rightarrow [0, \infty)$ attains a minimizer in $W^{s,p}(\Omega;K)$. 
		\item{\emph{Stability:}} Let $\alpha > 0$ be fixed, $\vd \in L^p(\Sigma;\R^M)$ and let $(v_k)_{k \in \N}$ be a sequence 
		in $L^p(\Sigma;\R^M)$ such that $\norm{\vd-v_k}_{L^p(\Sigma;\R^M)} \rightarrow 0$. Then every sequence 
		$(w_k)_{k \in \N}$ satisfying 
		\begin{equation*}
			w_k \in \arg \min \set{ \F_{[d]}^{\alpha,v_k}(w):\ w \in W^{s,p}(\Omega;K) }
		\end{equation*}	
		has a converging subsequence with respect to the weak topology of $W^{s,p}(\Omega;\R^m)$. 
		The limit $\tilde{w}$ of every such converging subsequence $(w_{k_j})_{j \in \N}$ is a minimizer of 
		$\F_{[d]}^{\alpha,\vd}$. Moreover, $(\Reg_{[\d]}^l(w_{k_j}))_{j \in \N}$ converges to $\Reg_{[\d]}^l(\tilde{w})$.	
		
		\item{\emph{Convergence:}} Let $\alpha:(0, \infty) \rightarrow (0,\infty)$ be a function satisfying $\alpha(\delta) \rightarrow 0$ and 
		$\frac{\delta^p}{\alpha(\delta)} \rightarrow 0$ for $\delta \to 0$.
		
		Let $(\delta_k)_{k \in \N}$ be a sequence of positive real numbers converging to $0$. Moreover, let 
		$(v_k)_{k \in \N}$ be a sequence in $L^p(\Sigma;\R^M)$ with $\norm{v^0-v_k}_{L^p(\Sigma;\R^M)} \leq \delta_k$ and 
		set $\alpha_k \defeq \alpha(\delta_k)$. Then every sequence 
		$$\left(w_k \in \arg \min \set{ \F^{\alpha_k,v_k}_{[d]}(w):\ w \in W^{s,p}(\Omega;K)} \right)_{k \in \N}$$ 
		has a \emph{weakly converging} subsequence 
		$w_{k_j} \rightharpoonup \tilde{w}$ as $j \to \infty$ (with respect to the topology of $W^{s,p}(\Omega;\R^m)$), and the limit 
		$\tilde{w}$ is a $\Reg_{[\d]}^l$-minimizing solution. 
		In addition, $\Reg_{[\d]}^l(w_{k_j}) \rightarrow \Reg_{[\d]}^l(\tilde{w})$. Moreover, if $w^\dagger$ is unique, then it follows that 
		$w_k \rightharpoonup w^\dagger$ weakly (with respect to the topology of $W^{s,p}(\Omega;\R^m)$) and 
		$\Reg_{[\d]}^l(w_{k}) \rightarrow \Reg_{[\d]}^l(w^\dagger)$.
	\end{description}
\end{theorem}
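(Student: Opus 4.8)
The plan is to run the direct method of the calculus of variations for each of the three assertions, the common tools being the weak precompactness of level sets (\autoref{ass:2}), the sequential closedness of $W^{s,p}(\Omega;K)$ together with the pointwise-a.e.\ subsequence from \autoref{lem:inclusion}, and the weak continuity of $\op$. Before treating the three parts I would isolate two facts. \emph{(a) Lower semicontinuity:} if $w_k\rightharpoonup\tilde w$ weakly in $W^{s,p}(\Omega;\R^m)$ with $w_k\in W^{s,p}(\Omega;K)$, then $\tilde w\in W^{s,p}(\Omega;K)$ and $\Reg_{[\d]}^l(\tilde w)\le\liminf_k\Reg_{[\d]}^l(w_k)$; to prove this I pass to a subsequence realizing the $\liminf$, extract by \autoref{lem:inclusion} a further subsequence converging pointwise a.e.\ on $\Omega$, so that the integrand $\d^p(w_k(x),w_k(y))\,\enorm{x-y}^{-n-ps}\rho^l(x-y)$ converges a.e.\ on $\Omega\times\Omega$ by continuity of $\d$, and then apply Fatou's lemma. \emph{(b) A change-of-data estimate:} the $L^p$-triangle inequality and convexity of $t\mapsto t^p$ give
\[
\F_{[d]}^{\alpha,v^0}(w)\le 2^{p-1}\F_{[d]}^{\alpha,v}(w)+2^{p-1}\norm{v-v^0}_{L^p(\Sigma;\R^M)}^p,
\]
so that bounded values of $\F_{[d]}^{\alpha,v}$ together with a bounded data distance force bounded values of $\F_{[d]}^{\alpha,v^0}$; this lets me invoke the precompactness of \autoref{ass:2}, which is phrased only for the exact data $v^0$, for arbitrary data.

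For \emph{Existence}, \autoref{ass:2} makes the infimum finite, so a minimizing sequence $(w_k)$ has bounded $\F_{[d]}^{\alpha,v}$-values; by (b) it lies in a level set of $\F_{[d]}^{\alpha,v^0}$ and hence has a weakly convergent subsequence $w_{k_j}\rightharpoonup\tilde w$. Weak continuity of $\op$ gives $\op[w_{k_j}]\to\op[\tilde w]$ strongly in $L^p$, so the residual term converges, while (a) supplies lower semicontinuity of the regularizer; adding these yields $\F_{[d]}^{\alpha,v}(\tilde w)\le\liminf_j\F_{[d]}^{\alpha,v}(w_{k_j})$, and $\tilde w\in W^{s,p}(\Omega;K)$ by \autoref{lem:inclusion}, so $\tilde w$ is a minimizer. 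For \emph{Stability} I compare $\F_{[d]}^{\alpha,v_k}(w_k)\le\F_{[d]}^{\alpha,v_k}(\hat w)$ with a fixed $\hat w$ from a nonempty level set; since $v_k\to\vd$ the right-hand side is bounded, so by (b) a weak limit $w_{k_j}\rightharpoonup\tilde w$ exists. That $\tilde w$ minimizes $\F_{[d]}^{\alpha,\vd}$ follows from the chain
\[
\F_{[d]}^{\alpha,\vd}(\tilde w)\le\liminf_j\F_{[d]}^{\alpha,v_{k_j}}(w_{k_j})\le\limsup_j\F_{[d]}^{\alpha,v_{k_j}}(w_{k_j})\le\F_{[d]}^{\alpha,\vd}(w),
\]
valid for every competitor $w$, where the first step uses (a) and $\op[w_{k_j}]-v_{k_j}\to\op[\tilde w]-\vd$ in $L^p$, and the last uses minimality of $w_{k_j}$ and continuity of the residual in the data at fixed $w$. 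Choosing $w=\tilde w$ turns the chain into equalities; as the residual term already converges, the regularizer values must also converge, $\Reg_{[\d]}^l(w_{k_j})\to\Reg_{[\d]}^l(\tilde w)$.

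For \emph{Convergence}, let $w^\dagger$ be a $\Reg_{[\d]}^l$-minimizing solution from \autoref{lem:MNS}. Testing minimality of $w_k$ against $w^\dagger$ and using $\op[w^\dagger]=v^0$ yields
\[
\norm{\op[w_k]-v_k}_{L^p(\Sigma;\R^M)}^p+\alpha_k\Reg_{[\d]}^l(w_k)\le\delta_k^p+\alpha_k\Reg_{[\d]}^l(w^\dagger),
\]
whence $\op[w_k]\to v^0$ in $L^p$ (from $\delta_k,\alpha_k\to0$) and $\Reg_{[\d]}^l(w_k)\le\delta_k^p/\alpha_k+\Reg_{[\d]}^l(w^\dagger)$, giving $\limsup_k\Reg_{[\d]}^l(w_k)\le\Reg_{[\d]}^l(w^\dagger)$ since $\delta_k^p/\alpha_k\to0$. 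Fixing any $\alpha_0>0$, this bound on $\Reg_{[\d]}^l(w_k)$ and the convergence of the residual make $\F_{[d]}^{\alpha_0,v^0}(w_k)$ bounded, so \autoref{ass:2} furnishes a weak subsequential limit $\tilde w$; weak continuity of $\op$ forces $\op[\tilde w]=v^0$, and (a) gives $\Reg_{[\d]}^l(\tilde w)\le\liminf_j\Reg_{[\d]}^l(w_{k_j})\le\Reg_{[\d]}^l(w^\dagger)$, so $\tilde w$ is itself $\Reg_{[\d]}^l$-minimizing and the inequalities collapse to $\Reg_{[\d]}^l(w_{k_j})\to\Reg_{[\d]}^l(\tilde w)$. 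The uniqueness addendum is the usual subsequence-of-subsequence argument: every subsequence of $(w_k)$ has a further subsequence converging weakly to a $\Reg_{[\d]}^l$-minimizing solution, necessarily $w^\dagger$, so the whole sequence converges, and likewise for the regularizer values.

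The step I expect to be the main obstacle is fact (a), the weak lower semicontinuity of $\Reg_{[\d]}^l$ for a \emph{general} metric $\d$. In the Euclidean case \autoref{re:RP} shows that $(\Reg_{[\dRM]}^l)^{1/p}$ is a seminorm, hence convex and automatically weakly lower semicontinuous; for an arbitrary $\d$ no such convexity is available, so the argument must be routed through the pointwise-a.e.\ convergent subsequence provided by \autoref{lem:inclusion} and through Fatou's lemma, which is precisely why that pointwise statement is indispensable here.
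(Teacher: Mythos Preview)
Your argument is correct and is precisely the standard route via the direct method, weak sequential lower semicontinuity of $\Reg_{[\d]}^l$ through Fatou's lemma on a pointwise-a.e.\ convergent subsequence, and the data-change inequality (your fact~(b), which is the paper's \autoref{eq:LS}). Note, however, that the paper does \emph{not} supply its own proof of \autoref{th:ex}: it simply quotes the result from \cite{CiaMelSch19}, whose analysis in turn rests on the abstract regularization framework of \cite{SchGraGroHalLen09,SchuKalHofKaz12}. Your write-up is a faithful reconstruction of exactly that argument, so there is nothing to contrast; the only remark is that your observation about fact~(a) being the crux---that convexity is unavailable for general $\d$ and one must go through \autoref{lem:inclusion} and Fatou---is on point and is indeed how the cited reference handles it.
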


For the Radon transform, $\op = \radon$ (see \autoref{def:Radon} below), the solution of \autoref{eq:op} is unique \cite{Nat01}, 
guaranteeing that we have convergence of the \emph{whole} sequence $(w_k)_{k \in \N}$ in \autoref{th:ex}.

Uniqueness of a $\Reg_{[\d]}^l$-minimizing solution of \autoref{eq:op} also holds true in case $\op = \ray$ (see \autoref{rem:RayTransform1} below) 
when $\d = \d_{\R^2}$ on $W^{s,p}(\Omega;\R^2)$:
To see this, let $w_1,w_2 \in W^{s,p}(\Omega;\R^2), w_1 \neq w_2$ be two different $\Reg_{[\d_{\R^2}]}^l$-minimizing solutions 
as in \autoref{def:MNS}. 
The linearity of $\ray$ ensures that $w_1$ and $w_2$ are not linearly dependent 
because otherwise $w_1 = \lambda w_2$ and thus 
\begin{equation*}
\ray[w_1]  = \ray[\lambda w_2] = \lambda \ray[w_2] = v^0,
\end{equation*}
which is only possible if $\lambda = 1$. 
Moreover, $w_1$ and $w_2$ are not equal up to a constant: Assume on the contrary, $w_1 = w_2 + c$. Then 
\begin{equation*}
\ray[w_1] = \ray[w_2] + \ray[c] = v^0,
\end{equation*}
which implies $\ray[c] = 0$ and in turn implies $c = 0$, see \autoref{rem:RayTransform1} below, i.e. 
the null-function is the only constant function lying in the kernel of $\ray$, \cite{NatWue01}.

The linearity of $\ray$ also gives that $\ray[\frac{1}{2}(w_1+w_2)] = v^0$, 
which shows that $(w_1+w_2)/2$ is also a solution (note that we assume that $K=\R^2$). 
Moreover, from \autoref{re:RP} it follows that
\begin{equation*} \begin{aligned}
\Reg_{[\d_{\R^2}]}^l \left( \frac{1}{2}w_1 + \frac{1}{2} w_2 \right)
& < \left( \frac{1}{2} \big(\Reg_{[\dRM]}^l(w_1) \big)^{\frac{1}{p}} + \frac{1}{2} \big(\Reg_{[\d_{\R^2}]}^l(w_1) \big)^{\frac{1}{p}} \right)^p \\
& \leq 2^{p-1} \left( \frac{1}{2^p}\Reg_{[\d_{\R^2}]}^l(w_1) + \frac{1}{2^p}\Reg_{[\d_{\R^2}]}^l(w_1) \right) \\
&= \frac{1}{2} \Reg_{[\d_{\R^2}]}^l(w_1) + \frac{1}{2}\Reg_{[\d_{\R^2}]}^l(w_2) \\
&= \min \set{  \Reg_{[\d_{\R^2}]}^l(w):\ w \in W^{s,p}(\Omega;\R^2),\ray[w] = v^0  },
\end{aligned} \end{equation*}
where the first inequality is strict because $w_1$ and $w_2$ are linearly independent and not equal up to a constant. 
This yields that $w_1$ and $w_2$ must be equal. \\
In the numerical examples in \autoref{sec:5} we consider particular subsets $K$ with associated metrics $d$, see \autoref{subsec:vfdata}, and \emph{not}  $K = \R^2$ and $\d = \d_{\R^2}$. For this case we do not know if the \emph{$\Reg_{[\d]}^l$}-minimizing solution is unique.

\section{Regularization of the Radon and Ray Transform Inversion} \label{sec:3}
In this section we verify \autoref{ass:2}, in the case $\op = \mathcal{R}, \mathcal{D}$, respectively, such that 
\autoref{lem:MNS} and \autoref{th:ex} are applicable. For computational purposes it is convenient to identify every function 
$w \in L^p(\Omega;\R^m)$ with its extension by $0$ outside of $\Omega$.

\begin{definition}[Radon transform] \label{def:Radon}
	For given $(r,\varphi) \in \Sigma$ let 
	$$H^{n-1}_{r,\varphi} \defeq \set{  x \in \R^n:\ x \cdot \varphi = r  }$$
	denote the $n-1$-dimensional hyperplane with orientation $\varphi$ and distance $r$ from the origin.
	The \emph{Radon transform}  
	$\radon:L^p(\Omega;\R^m) \rightarrow L^p(\Sigma;\R^m)$ computes the componentwise averages of a function $w$ over these hyperplanes and thus 
	is given by
	\begin{equation}\label{eq:RadonTransform}\boxed{
		\radon[w] (r,\varphi) \defeq \int_{H^{n-1}_{r,\varphi}} w(x) \dH(x),}
	\end{equation}
	where the transformation is understood component-wise, that is
	$\radon[w]  \defeq (\radon[w]_1, \dots, \radon[w]_m)^T$.
	Here, we denote by $\dH$ the $(n-1)$-dimensional Hausdorff measure on the hyperplane $H^{n-1}_{r,\varphi}$.
			
\end{definition}
Now we recall that $\radon$ is indeed a well-defined and sequentially continuous operator from $W^{s,p}(\Omega;K) \subseteq L^p(\Omega;\R^m)$ to $L^p(\Sigma;\R^m)$ guaranteeing that Item \ref{itm:F1} of \autoref{ass:2} holds true.  
\begin{theorem} \label{th:RadProp} Let \autoref{ass:gen} hold.
	\begin{itemize}
		\item[$\bullet$] Assume that $w \in L^p(\Omega;\R^m)$. 
		Then $\radon[w] \in L^p(\Sigma;\R^m)$ and there exists a constant $C := C(R,n,p)$ such that
		\begin{equation}\label{eq:RadonContinuity}
			\norm{\radon[w]}_{L^p(\Sigma;\R^m)}^p \leq C \norm{w}_{L^p(\Omega;\R^m)}^p.
		\end{equation}
		\item[$\bullet$] The Radon transform
		is sequentially continuous with respect to the weak topology on
		$W^{s,p}(\Omega;\R^m)$,
		meaning that for a sequence  
		$(w_k)_{k\in \N}$ in $W^{s,p}(\Omega;K) \subseteq L^p(\Omega;\R^m)$ converging weakly to 
		$w_* \in W^{s,p}(\Omega;K)$ (with respect to $W^{s,p}(\Omega;\R^m)$) it holds that $\radon[w_k] \rightarrow \radon[w_*]$ strongly in $L^p(\Sigma,\R^m)$. 		
	\end{itemize}
\end{theorem}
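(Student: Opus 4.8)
The plan is to treat the two bullet points in turn, and to obtain the weak-to-strong continuity in the second bullet as a consequence of the $L^p$-boundedness of the first bullet combined with the compact embedding recorded in \autoref{lem:inclusion}.

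For the boundedness estimate I would argue componentwise. Fix a component index $i$ and recall that $w$ is extended by zero outside $\Omega \subseteq \mathcal{B}_R(0)$, so that for $r > R$ the hyperplane $H^{n-1}_{r,\varphi}$ misses the support of $w$ and $\radon[w]_i(r,\varphi) = 0$, while for $0 \leq r \leq R$ the section $H^{n-1}_{r,\varphi} \cap \mathcal{B}_R(0)$ is an $(n-1)$-dimensional ball of radius $\sqrt{R^2 - r^2}$ whose Hausdorff measure is bounded by a constant $C_1 = C_1(R,n)$ independent of $(r,\varphi)$. Applying Hölder's inequality on this section with exponents $p$ and $p' = p/(p-1)$ gives the pointwise bound
\begin{equation*}
\enorm{\radon[w]_i(r,\varphi)}^p \leq C_1^{p/p'} \int_{H^{n-1}_{r,\varphi}} \enorm{w_i(x)}^p \dH(x).
\end{equation*}
It then remains to integrate over $(r,\varphi) \in \Sigma$, and the key tool is a Fubini/coarea identity: for fixed $\varphi$, foliating $\R^n$ by the hyperplanes $H^{n-1}_{r,\varphi}$ in the variable $r$ recovers $\int_{\{x\cdot\varphi \geq 0\}} \enorm{w_i}^p \dx$ when $r$ is restricted to $r \geq 0$, so integrating also over $\varphi \in \mathbb{S}^{n-1}$ yields
\begin{equation*}
\int_{\mathbb{S}^{n-1}} \int_{\{x\cdot\varphi \geq 0\}} \enorm{w_i(x)}^p \dx \, \mathrm{d}\varphi = \int_{\R^n} \enorm{w_i(x)}^p \enorm{\set{\varphi : x\cdot\varphi \geq 0}} \dx = \tfrac{1}{2}\enorm{\mathbb{S}^{n-1}}\, \norm{w_i}^p_{L^p(\Omega;\R)}.
\end{equation*}
Summing over $i$ produces \autoref{eq:RadonContinuity} with a constant $C = C(R,n,p)$. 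I expect the only delicate point to be this geometric bookkeeping, namely correctly accounting for the half-sphere factor and the uniform finiteness of the section measure near $r = R$.

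For the second bullet I would use the linearity of $\radon$ together with the estimate just proven. Given $w_k \rightharpoonup w_*$ weakly in $W^{s,p}(\Omega;\R^m)$, the sequence $(w_k)$ is bounded in $W^{s,p}$, so by the compact embedding of \autoref{lem:inclusion} every subsequence has a further subsequence converging strongly in $L^p(\Omega;\R^m)$; since the weak $W^{s,p}$-limit is preserved as a weak $L^p$-limit and weak limits are unique, each such strong limit must coincide with $w_*$, whence the entire sequence satisfies $w_k \to w_*$ strongly in $L^p(\Omega;\R^m)$. Applying \autoref{eq:RadonContinuity} to $w_k - w_*$ then gives
\begin{equation*}
\norm{\radon[w_k] - \radon[w_*]}^p_{L^p(\Sigma;\R^m)} = \norm{\radon[w_k - w_*]}^p_{L^p(\Sigma;\R^m)} \leq C\, \norm{w_k - w_*}^p_{L^p(\Omega;\R^m)} \longrightarrow 0,
\end{equation*}
which is precisely the asserted strong convergence $\radon[w_k] \to \radon[w_*]$ in $L^p(\Sigma;\R^m)$.
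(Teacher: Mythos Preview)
Your proposal is correct and follows essentially the same approach as the paper: H\"older on each hyperplane section followed by the Fubini/coarea foliation for the first bullet, and the compact embedding of \autoref{lem:inclusion} combined with \autoref{eq:RadonContinuity} for the second. The only cosmetic differences are that the paper bounds the section measure more crudely by $\mathcal{H}^{n-1}(\mathcal{B}^{n-1}_R(0))$ (without tracking the $\sqrt{R^2-r^2}$ radius or the half-sphere factor), and that the paper invokes the strong $L^p$-convergence directly from \autoref{lem:inclusion} rather than rederiving it via the subsequence argument you sketch.
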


\begin{proof}
	The first item is obtained by direct calculation using H\"olders inequality. The arguments are similar to the ones in \cite{Qui06}, where the case $p=1$ and $n=2$ is investigated.
	
    We show the desired inequalities for $m=1$ first: Denoting by $\chi_\Omega$ the characteristic function 
    of $\Omega$ and by $\Gamma$ the Gamma function we get for every $w \in L^p(\Omega;\R)$
	\begin{equation*} \begin{aligned}
		\norm{\radon[w]}_{L^p(\Sigma;\R)}^p &
  		= \int_{\mathbb{S}^{n-1}} \int_{0}^{R} \enorm{ \int_{H^{n-1}_{r,\varphi}}  w(x) \dH(x) }^p \,\mathrm{d}r \dH(\varphi) \\
 		& = \int_{\mathbb{S}^{n-1}} \int_{0}^{R} \enorm{ \int_{H^{n-1}_{r,\varphi}} \chi_{\Omega}(x) w(x) \dH(x) }^p \,\mathrm{d}r \dH(\varphi) \\
        & \leq \int_{\mathbb{S}^{n-1}} \int_{0}^{R} ( \mathcal{H}^{n-1}(\mathcal{B}^{n-1}_R(0)))^{p-1} 
        \int_{H^{n-1}_{r,\varphi}} \enorm{w(x)}^p \dH(x) \,\mathrm{d}r \dH(\varphi) \\
 		& \leq (\mathcal{H}^{n-1}(\mathcal{B}^{n-1}_R(0)))^{p-1} \mathcal{H}^{n-1}(\mathbb{S}^{n-1}) \norm{w}_{L^p(\Omega;\R)}^p \\
 		&= \left( \frac{\pi^{\frac{n-1}{2}}}{\Gamma(\frac{n-1}{2} + 1)}R^{n-1} \right)^{p-1}
 		\frac{n\pi^{\frac{n}{2}}}{\Gamma(\frac{n}{2} + 1)}
 		\norm{w}_{L^p(\Omega;\R)}^p
 		\eqdef C(R,n,p) \norm{w}_{L^p(\Omega;\R)}^p,
	\end{aligned} \end{equation*}
    where $\mathcal{H}^k$ denotes the $k$-dimensional Hausdorff measure.
	The claim for $m > 1$ follows by the equivalence of norms in $\R^m$. 	
	
	To prove the second item let $(w_k)_{k\in \N}$ be a sequence in $W^{s,p}(\Omega;K) \subseteq L^p(\Omega;\R^m)$ with $w_k \rightharpoonup w_*$ 
	weakly as $k \to \infty$ with respect to the $W^{s,p}(\Omega;\R^m)$ topology.  
	Then, from \autoref{lem:inclusion} it follows that $w_* \in W^{s,p}(\Omega;K)$ and that
	$w_k \rightarrow w_*$ strongly in $L^p(\Omega;\R^m)$. The assertion then follows from \autoref{eq:RadonContinuity}. 
\end{proof}

%%%%%%%%%%%%%%%%%%%%%
% Ray Transform
%%%%%%%%%%%%%%%%%%%%%
In the following we recall the definition of the ray transform. See \cite{Sch08} for more 
information on this transform and applications.

\begin{definition}[2D Ray transform]\label{rem:RayTransform1}
	Let $n=m=2$ and define $\theta \defeq \theta(\varphi) = (\cos(\varphi), \sin(\varphi))^T \in \sphere$ and
	$\theta^{\perp} \defeq \theta^{\perp}(\varphi) = (-\sin(\varphi), \cos(\varphi))^T \in \sphere$.
	Then the \emph{2D ray transform} $\ray:L^p(\Omega;\R^2) \rightarrow L^p(\Sigma;\R)$ is defined as follows (cf.\cite{Sch08}):
	\begin{equation*} %\label{eq:RayTransform}
	\boxed{
		\ray[w] (r,\varphi) \defeq \int_{H^{1}_{r,\varphi}} w(x) \cdot \theta^{\perp} \dH(x) 
		= \int_{-\infty}^{\infty}  w(r\theta + t \theta^{\perp}) \cdot \theta^{\perp}\mathrm{d}t.}
	\end{equation*}
\end{definition}
The 2D ray transform is related to the Radon transform by the following identity
$$\ray[w] (r,\varphi) = \radon[w] (r,\varphi) \cdot \theta^{\perp},$$ 
and therefore properties of $\ray$ can be inherited from $\radon$:
\begin{theorem} \label{th:RayProp} Let \autoref{ass:gen} hold.
	\begin{itemize}
		\item[$\bullet$] There exists a constant $C:=C(\Omega,n,p)$ such that for all $w \in L^p(\Omega;\R^2)$,
		\begin{equation}\label{eq:RayContinuity}
			\norm{\ray[w]}_{L^p(\Sigma;\R)}^p \leq C \norm{w}_{L^p(\Omega;\R^2)}^p.
		\end{equation}
		\item[$\bullet$] The 2D ray transform is sequentially continuous with respect to the weak topology on
		$W^{s,p}(\Omega;\R^2)$. 
		That is, for a sequence  
		$(w_k)_{k\in \N}$ in $W^{s,p}(\Omega;K) \subseteq L^p(\Omega;\R^2)$ converging weakly to 
		$w_* \in W^{s,p}(\Omega;K)$ (with respect to $W^{s,p}(\Omega;\R^2)$) it holds that $\ray[w_k] \rightarrow \ray[w_*]$ strongly in $L^p(\Sigma,\R)$.
	\end{itemize}  
\end{theorem}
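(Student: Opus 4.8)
The plan is to exploit the identity
$\ray[w](r,\varphi) = \radon[w](r,\varphi) \cdot \theta^{\perp}(\varphi)$
recorded just above the statement, which realizes the ray transform as a pointwise projection of the vector-valued Radon transform onto the unit vector $\theta^{\perp}(\varphi)$. Because $\enorm{\theta^{\perp}(\varphi)} = 1$ for \emph{every} $\varphi$, the Cauchy--Schwarz inequality furnishes the pointwise bound
$\enorm{\ray[w](r,\varphi)} \leq \enorm{\radon[w](r,\varphi)}$,
valid uniformly over $(r,\varphi) \in \Sigma$. With this in hand both assertions transfer directly from the already-established properties of $\radon$ in \autoref{th:RadProp}.

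For the continuity estimate \autoref{eq:RayContinuity} I would raise the pointwise bound to the $p$-th power and integrate over $\Sigma$, giving
\begin{equation*}
\norm{\ray[w]}_{L^p(\Sigma;\R)}^p = \int_\Sigma \enorm{\ray[w](r,\varphi)}^p \drp \leq \int_\Sigma \enorm{\radon[w](r,\varphi)}^p \drp = \norm{\radon[w]}_{L^p(\Sigma;\R^2)}^p,
\end{equation*}
and then invoke \autoref{eq:RadonContinuity} (with $n=2$) to bound the right-hand side by $C\norm{w}_{L^p(\Omega;\R^2)}^p$. Thus \autoref{eq:RayContinuity} holds with a constant inherited verbatim from the Radon case.

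For the sequential weak-to-strong continuity, I take a sequence $(w_k)$ in $W^{s,p}(\Omega;K)$ with $w_k \rightharpoonup w_*$ in $W^{s,p}(\Omega;\R^2)$. The second part of \autoref{th:RadProp} already gives $\radon[w_k] \to \radon[w_*]$ strongly in $L^p(\Sigma;\R^2)$. Using the linearity of $\ray$ and applying the pointwise Cauchy--Schwarz bound to $w_k - w_*$, I obtain
\begin{equation*}
\norm{\ray[w_k] - \ray[w_*]}_{L^p(\Sigma;\R)}^p = \norm{\ray[w_k - w_*]}_{L^p(\Sigma;\R)}^p \leq \norm{\radon[w_k] - \radon[w_*]}_{L^p(\Sigma;\R^2)}^p \longrightarrow 0,
\end{equation*}
which is exactly the claimed strong convergence $\ray[w_k] \to \ray[w_*]$ in $L^p(\Sigma;\R)$.

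There is no genuine obstacle in this argument: the whole proof is a reduction to $\radon$, and the only point deserving attention is that the Cauchy--Schwarz estimate is uniform in $\varphi$ precisely because $\theta^{\perp}(\varphi)$ has unit length for all $\varphi$, so no $\varphi$-dependence enters the constants and the estimate for $\ray$ carries the same constant as that for $\radon$.
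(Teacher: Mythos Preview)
Your proposal is correct and follows exactly the approach the paper indicates: the paper does not spell out a proof of \autoref{th:RayProp} but simply remarks, via the identity $\ray[w](r,\varphi)=\radon[w](r,\varphi)\cdot\theta^{\perp}$, that ``properties of $\ray$ can be inherited from $\radon$''. You have supplied precisely the missing details, using the unit length of $\theta^{\perp}$ and Cauchy--Schwarz to dominate $\ray$ pointwise by $\radon$ and then invoking \autoref{th:RadProp}.
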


Our goal is to show that the functional defined in \autoref{eq:reg} with $\op = \radon$ and $\op = \ray$, respectively, fulfills \autoref{ass:2}.
The first item of \autoref{ass:2} has been proven already in \autoref{th:RadProp} and in \autoref{th:RayProp}, respectively, and now we state 
a result which gives the second assertion of \autoref{ass:2}. 

Therefore we first need the following lemma.

\begin{lemma}\label{lem:meanbound}

	Let \autoref{ass:gen} hold and let $\op = \radon$ or $\op = \ray$, with associated dimension M as in \autoref{eq:M}, respectively.
	In addition assume that $\vd \in L^p(\Sigma;\R^M)$. 
	Let $w \in \level{\F_{[d]}^{\alpha,\vd}}{t} := \set{w \in W^{s,p}(\Omega;K) : \F_{[d]}^{\alpha,\vd}(w) \leq t}$.
	
	Then, for every $t > 0$ there exists a constant $C_M > 0$ such that
	\begin{equation}\label{eq:meanbound}
	\norm{\op [\overline{w}]}_{L^p(\Sigma;\R^M)} \leq C_M,
	\end{equation}
	where $\bar{w}$ is the component-wise mean average of $w$ as defined in \autoref{eq:PT}.
\end{lemma}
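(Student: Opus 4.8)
The plan is to read off two separate bounds from the hypothesis $w \in \level{\F_{[d]}^{\alpha,\vd}}{t}$ and then to transfer control from $w$ to its mean $\overline{w}$ using linearity of the transform together with the Poincaré inequality of \autoref{le:PoL}. Since the two summands defining $\F_{[d]}^{\alpha,\vd}$ in \autoref{eq:reg} are nonnegative, the inequality $\F_{[d]}^{\alpha,\vd}(w) \leq t$ yields at once
\begin{equation*}
\int_\Sigma \enorm{\op[w](r,\varphi) - \vd(r,\varphi)}^p \drp \leq t \quad \text{and} \quad \Reg_{[\d]}^l(w) \leq \frac{t}{\alpha}.
\end{equation*}
The first of these, combined with the triangle inequality in $L^p(\Sigma;\R^M)$ and the assumption $\vd \in L^p(\Sigma;\R^M)$, already bounds $\op[w]$:
\begin{equation*}
\norm{\op[w]}_{L^p(\Sigma;\R^M)} \leq \norm{\op[w] - \vd}_{L^p(\Sigma;\R^M)} + \norm{\vd}_{L^p(\Sigma;\R^M)} \leq t^{1/p} + \norm{\vd}_{L^p(\Sigma;\R^M)}.
\end{equation*}

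Next I would \emph{not} estimate $\op[\overline{w}]$ directly, but instead use that both $\radon$ and $\ray$ are linear to write $\op[\overline{w}] = \op[w] - \op[w - \overline{w}]$, where $\overline{w}$ is the constant component-wise mean of \autoref{eq:PT}. The continuity estimate of \autoref{th:RadProp} (respectively \autoref{th:RayProp}), applied to $w - \overline{w} \in L^p(\Omega;\R^m)$, furnishes a constant $C$ with
\begin{equation*}
\norm{\op[w - \overline{w}]}_{L^p(\Sigma;\R^M)}^p \leq C \norm{w - \overline{w}}_{L^p(\Omega;\R^m)}^p,
\end{equation*}
and the Poincaré inequality \autoref{eq:PT} controls the right-hand side by the regularizer, namely $\norm{w - \overline{w}}_{L^p(\Omega;\R^m)}^p \leq C_P \Reg_{[\d]}^l(w) \leq C_P t / \alpha$, where the last step is exactly where the level-set bound on $\Reg_{[\d]}^l$ is consumed.

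Finally, a second application of the triangle inequality assembles the two pieces:
\begin{equation*}
\norm{\op[\overline{w}]}_{L^p(\Sigma;\R^M)} \leq \norm{\op[w]}_{L^p(\Sigma;\R^M)} + \norm{\op[w - \overline{w}]}_{L^p(\Sigma;\R^M)} \leq t^{1/p} + \norm{\vd}_{L^p(\Sigma;\R^M)} + \left( \frac{C C_P t}{\alpha} \right)^{1/p} \eqdef C_M.
\end{equation*}
The resulting $C_M$ depends only on $t$, $\alpha$, $\norm{\vd}_{L^p(\Sigma;\R^M)}$ and the structural constants $C$ and $C_P$, as required, and the argument is uniform over both cases $\op = \radon$ (with $M = m$) and $\op = \ray$ (with $M = 1$), since only linearity and the $L^p$-continuity of $\op$ are invoked. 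I do not expect a genuine analytic obstacle here; the only point worth flagging is the indirect route of the middle paragraph, where one bounds $\op[\overline{w}]$ through $\op[w]$ and the Poincaré-controlled remainder $\op[w - \overline{w}]$ rather than analysing the transform of a constant function head-on.
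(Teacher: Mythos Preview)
Your argument is correct and uses the same ingredients as the paper's proof---the two bounds extracted from $\F_{[d]}^{\alpha,\vd}(w)\le t$, linearity of $\op$, the $L^p$-continuity estimates of \autoref{th:RadProp}/\autoref{th:RayProp}, and the Poincar\'e inequality of \autoref{le:PoL}---but your route is more direct. The paper, after obtaining $\|\op[w]\|_{L^p}\le t_0$, does not apply the triangle inequality to $\op[\overline{w}]=\op[w]-\op[w-\overline{w}]$ as you do; instead it squares, uses the reverse triangle inequality
\[
t_0^2 \ge \|\op[w]\|^2 \ge \big(\|\op[\overline{w}]\|-\|\op[w-\overline{w}]\|\big)^2,
\]
and then solves the resulting quadratic inequality $r(r-2z)\le t_0^2$ in $r=\|\op[\overline{w}]\|$ to arrive at $r\le \sqrt{t_0^2+z^2}+z$. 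Your single triangle inequality yields $r\le t_0+z$ immediately, which is in fact the same bound up to constants (and slightly sharper). So the two proofs agree in substance; yours just avoids an unnecessary detour through a quadratic estimate.
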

\begin{proof}
	Let $w \in \level{\F_{[d]}^{\alpha,\vd}}{t}$, then it follows from the definition of $\F_{[d]}^{\alpha,\vd}$ that
	\begin{equation}\label{eq:boF}
	\int_\Sigma \enorm{\op[w](r,\varphi)- \vd(r,\varphi)}^p \drp \leq t \text{ and } 
	\Reg_{[\d]}^l(w) 
	%= \int_{\Omega\times \Omega} \frac{\d^p(w(x) ,w(y))}{\enorm{x-y}^{n+p s}} \rho^l(x-y) \dxy &
	\leq \frac{t}{\alpha}.
	\end{equation}
	The first inequality ensures that 
	\begin{equation} \label{eq:boundOnRadon}
	\norm{\op [w]}_{L^p(\Sigma;\R^M)} \leq \norm{\op [w]-\vd}_{L^p(\Sigma;\R^M)} + \norm{\vd}_{L^p(\Sigma;\R^M)} \leq t^{\frac{1}{p}} + \norm{\vd}_{L^p(\Sigma;\R^M)} \eqdef t_0 < \infty. 
	\end{equation}
	Therefore,
	\begin{equation} \label{eq:t0} \begin{aligned}
	t_0^2 &\geq \norm{\op [w]}_{L^p(\Sigma;\R^M)}^2
	= \norm{\op [w- \overline{w}] + \op [\overline{w}] }_{L^p(\Sigma;\R^M)}^2\\
	&\geq \left( \norm{\op [w- \overline{w}] }_{L^p(\Sigma;\R^M)} - \norm{\op [\overline{w}] }_{L^p(\Sigma;\R^M)}  \right)^2 \\
	& \geq \norm{\op [\overline{w}] }_{L^p(\Sigma;\R^M)} \left( 
	\norm{\op [\overline{w}] }_{L^p(\Sigma;\R^M)} - 2 \norm{\op [w- \overline{w}] }_{L^p(\Sigma;\R^M)} \right) \\
	&\geq \norm{\op [\overline{w}] }_{L^p(\Sigma;\R^M)} \left( 
	\norm{\op [\overline{w}] }_{L^p(\Sigma;\R^M)} - 2 \norm{\op} \norm{w- \overline{w} }_{L^p(\Omega;\R^M)}
	\right),
	\end{aligned} \end{equation}
	where we used the linearity of $\op$. By $\norm{\op}$ we denote the operator norm of $\op: W^{s,p}(\Omega;K) \to L^p(\Sigma;\R^M)$.
	
	Defining $r \defeq \norm{\op [\overline{w}] }_{L^p(\Sigma;\R^M)}$ and $z \defeq \norm{\op} \norm{w- \overline{w}}_{L^p(\Omega;\R^M)}$ 
	it follows from \autoref{eq:PT} and \autoref{eq:boF} that
    \begin{equation} \label{eq:t1} \begin{aligned}
	0 \leq z &= \norm{\op} \norm{w-\overline{w}}_{L^p(\Omega;\R^M)} 
	%\leq \norm{\op} C_P^{1/p} (\Reg_{[\d_{\R^m}]}^l(w))^{1/p} 
	\leq \norm{\op} C_P^{1/p} (\Reg_{[\d]}^l(w))^{1/p} \\
	&\leq 
	\norm{\op} C_P^{1/p} \left( \frac{t}{\alpha} \right)^{1/p} \eqdef t_1 < \infty.
	\end{aligned}
	\end{equation}
	Then by using the equivalence of the relations
	\begin{equation*}
	0 \leq r \ , \ r(r-2z) \leq t_0^2 \quad \text{ and } 0 \leq r \leq \sqrt{t_0^2 + z^2} + z, 
	\end{equation*}	
	from \autoref{eq:t1} and \autoref{eq:boundOnRadon} we conclude that 
	\begin{equation}\label{eq:BoundOfMean}
	r = \norm{\op [\overline{w}] }_{L^p(\Sigma;\R^M)} 
	\leq \sqrt{t_0^2 + t_1^2} + t_1  \eqdef C_M < \infty.
	\end{equation}
\end{proof}

The following coercivity result is important in guaranteeing the second assertion of \autoref{ass:2}.

\begin{theorem}[Coercivity]\label{th:Coercivity} 
	Let \autoref{ass:gen} hold and let $\op = \radon$ or $\op = \ray$ with associated dimension M as in \autoref{eq:M}, respectively.
	In addition assume that $\vd \in L^p(\Sigma;\R^M)$.
	The functional $\F_{[d]}^{\alpha,\vd}$ defined in \autoref{eq:reg} with operators 
	$\op = \radon$ and $\op = \ray$, respectively, is $W^{s,p}$-coercive, that is
	for every $t > 0$ there exists $C > 0$ such that
	\begin{equation}\label{eq:coercivity}
     \norm{w}_{W^{s,p}(\Omega;\R^m)} \leq C \text{ for all } w \in \level{\F_{[d]}^{\alpha,\vd}}{t}.
	\end{equation}
\end{theorem}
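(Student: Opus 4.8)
The plan is to bound separately the two summands of $\norm{w}_{W^{s,p}(\Omega;\R^m)}^p = \norm{w}_{L^p(\Omega;\R^m)}^p + \abs{w}_{W^{s,p}(\Omega;\R^m)}^p$ uniformly over the level set. Fix $t>0$ and $w \in \level{\F_{[d]}^{\alpha,\vd}}{t}$. Exactly as in the proof of \autoref{lem:meanbound}, the definition of $\F_{[d]}^{\alpha,\vd}$ in \autoref{eq:reg} forces $\Reg_{[\d]}^l(w) \leq t/\alpha$, so the regularization term is controlled from the start. The only genuinely missing ingredient is a uniform bound on $\norm{w}_{L^p(\Omega;\R^m)}$, which I would obtain by splitting $w = (w-\overline{w}) + \overline{w}$ and treating the oscillation and the mean separately.

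For the oscillation, the Poincaré inequality of \autoref{le:PoL} gives $\norm{w-\overline{w}}_{L^p(\Omega;\R^m)}^p \leq C_P \Reg_{[\d]}^l(w) \leq C_P t/\alpha$. For the constant vector $\overline{w}$, \autoref{lem:meanbound} supplies $\norm{\op[\overline{w}]}_{L^p(\Sigma;\R^M)} \leq C_M$, and the remaining task is to convert this into a bound on the Euclidean length $\enorm{\overline{w}}$. This is the main obstacle, and it rests on the observation that $\op$, restricted to the finite-dimensional space of constant functions on $\Omega$ (extended by zero), is injective, so that $\enorm{\overline{w}} \leq c_0 \norm{\op[\overline{w}]}_{L^p(\Sigma;\R^M)}$ for some $c_0 > 0$ depending only on $\Omega$ and $\op$. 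For $\op = \radon$ this is explicit: writing $\ell(r,\varphi) \defeq \mathcal{H}^{n-1}(\Omega \cap H^{n-1}_{r,\varphi})$ one has $\radon[\overline{w}](r,\varphi) = \overline{w}\,\ell(r,\varphi)$ componentwise, whence $\norm{\radon[\overline{w}]}_{L^p(\Sigma;\R^m)} = \enorm{\overline{w}}\,\norm{\ell}_{L^p(\Sigma;\R)}$ with $\norm{\ell}_{L^p(\Sigma;\R)} > 0$ since $\abs{\Omega} > 0$. For $\op = \ray$ one has $\ray[\overline{w}](r,\varphi) = (\overline{w}\cdot\theta^\perp(\varphi))\,\ell(r,\varphi)$; since $\ell(\cdot,\varphi)$ is positive on a set of positive measure for every $\varphi$ and $\theta^\perp(\varphi)$ sweeps out all directions, the kernel characterization of \autoref{rem:RayTransform1} shows that $\overline{w} \mapsto \ray[\overline{w}]$ is injective, and injectivity of a linear map out of the finite-dimensional $\R^2$ yields the desired lower bound. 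In either case $\enorm{\overline{w}} \leq c_0 C_M$, so $\norm{\overline{w}}_{L^p(\Omega;\R^m)} = \abs{\Omega}^{1/p}\enorm{\overline{w}}$ is bounded, and adding the two contributions gives a uniform bound on $\norm{w}_{L^p(\Omega;\R^m)}$.

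It then remains to bound the fractional seminorm. For $l=1$ this is immediate from the norm equivalence of \autoref{le:Equiv}, which gives $\norm{w}_{W^{s,p}(\Omega;\R^m)}^p \leq \overline{C}\big(\norm{w}_{L^p(\Omega;\R^m)}^p + \Reg_{[\d]}^1(w)\big)$, both terms on the right having already been bounded. For $l=0$ one argues directly: by \autoref{eq:MI} one has $\abs{w}_{W^{s,p}(\Omega;\R^m)}^p = \Reg_{[\dRM]}^0(w) \leq \Reg_{[\d]}^0(w) \leq t/\alpha$, so again the seminorm is controlled. Collecting the $L^p$-bound and the seminorm bound produces a constant $C$, depending only on $t$, $\alpha$, $\Omega$, $\op$ and the parameters, with $\norm{w}_{W^{s,p}(\Omega;\R^m)} \leq C$ for all $w$ in the level set. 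As indicated, the hard part is the passage from the bound on $\norm{\op[\overline{w}]}_{L^p(\Sigma;\R^M)}$ to a bound on $\overline{w}$ itself; everything else is a routine combination of the Poincaré inequality, the norm equivalence, and the elementary estimate $\Reg_{[\dRM]} \leq \Reg_{[\d]}$ coming from \autoref{eq:MI}.
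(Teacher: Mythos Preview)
Your proposal is correct and follows essentially the same route as the paper: split $w=(w-\overline{w})+\overline{w}$, control the oscillation via the Poincar\'e inequality \autoref{le:PoL}, invoke \autoref{lem:meanbound} for $\norm{\op[\overline{w}]}_{L^p}$, convert this to a bound on $\enorm{\overline{w}}$ via the identity $\norm{\radon[\overline{w}]}_{L^p}=\enorm{\overline{w}}\,\norm{\radon[\chi_\Omega]}_{L^p}$, and finish the seminorm bound by the case split $l=0$ (direct, since $\Reg_{[\dRM]}^0=\abs{\cdot}_{W^{s,p}}^p$) versus $l=1$ (via \autoref{le:Equiv}). The paper only writes out $\op=\radon$ and declares $\op=\ray$ ``similar''; your abstract framing of that step as injectivity of $\op$ on the finite-dimensional space of constants is a clean way to cover both cases at once.
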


\begin{proof}  
 We only prove the result for $\op = \radon$, for $\op = \ray$ the proof can be done in a similar way. \\
 The first inequality in \autoref{eq:boF} implies (as in the proof of \autoref{lem:meanbound}) \autoref{eq:boundOnRadon}.
	
 We start by bounding the $L^p$-norm of $w \in \level{\F_{[d]}^{\alpha,\vd}}{t}$: From Jensen's inequality it follows that
	\begin{equation} \label{eq:EWM}
		\norm{w}_{L^p(\Omega;\R^m)}^p \leq \left(\norm{w-\overline{w} }_{L^p(\Omega;\R^m)} + \norm{\overline{w} }_{L^p(\Omega;\R^m)} \right)^p
		\leq 2^{p-1} \left(\norm{w-\overline{w} }_{L^p(\Omega;\R^m)}^p + \norm{\overline{w} }_{L^p(\Omega;\R^m)}^p \right),
	\end{equation}
	where $\bar{w}$ is as defined in \autoref{eq:PT}.
	Due to the linearity of the Radon transform we know that 
    $\norm{\radon [\overline{w}]}_{L^p(\Sigma;\R^m)}^p = 
		\enorm{\overline{w}}^p \norm{\radon [\chi_{\Omega}]}_{L^p(\Sigma;\R)}^p$, or in other words
    \begin{equation} \label{eq:RMB}
	  \begin{aligned}
	    \frac{1}{\abs{\Omega}} \norm{\overline{w}}_{L^p(\Omega;\R^m)}^p =
		\enorm{\overline{w}}^p = \norm{\radon [\chi_{\Omega}]}_{L^p(\Sigma;\R)}^{-p} \norm{\radon [\overline{w}]}_{L^p(\Sigma;\R^m)}^p 
		=: C_\radon \norm{\radon [\overline{w}]}_{L^p(\Sigma;\R^m)}^p.
	  \end{aligned}
	\end{equation}
	Inserting this into \autoref{eq:EWM} and using \autoref{eq:PT}, \autoref{eq:t1} and \autoref{eq:BoundOfMean} it follows that
	\begin{equation} \label{eq:bRLp}
	\norm{w}_{L^p(\Omega;\R^m)}^p 
	\leq 2^{p-1} \left(\norm{w-\overline{w}}_{L^p(\Omega;\R^m)}^p + \enorm{\overline{w}}^p|\Omega| \right)
	\leq 2^{p-1} \left(C_P \frac{t}{\alpha} + C_\radon C_M |\Omega| \right)  \eqdef \tilde{C} < \infty. 
	\end{equation}
	In order to prove \autoref{eq:coercivity} it remains to bound the fractional Sobolev semi-norm.
    Because of \autoref{eq:MI} and \autoref{eq:boF} we have $\Reg_{[\dRM]}^l \leq \Reg_{[\d]}^l \leq \frac{t}{\alpha}$.
   
    \begin{itemize}
     \item If $l=0$, then because $\Reg_{[\dRM]}^0 = \abs{\cdot}_{W^{s,p}(\Omega;\R^m)}$ if follows from \autoref{eq:bRLp} that
	\begin{equation}\label{eq:l0}
	 \norm{w}_{W^{s,p}(\Omega;\R^m)}^p = \norm{w}_{L^p(\Omega;\R^m)}^p + \Reg_{[\dRM]}^0(w) \leq \tilde{C} + \frac{t}{\alpha} =:C^0.
	\end{equation}
	 \item 
	If $l=1$, then from \autoref{le:Equiv}, it follows that
	\begin{equation}\label{eq:l1}
	\abs{w}_{W^{s,p}(\Omega;\R^m)}^p \leq	\norm{w}^p_{W^{s,p}(\Omega;\R^m)} \leq \overline{C} \left( \norm{w}^p_{L^p(\Omega;\R^m)} + \Reg_{[\d]}^1(w)  \right) \leq \overline{C} \left( \tilde{C} + \frac{t}{\alpha}  \right) =: C^1.
	\end{equation}
	\end{itemize}
To conclude we define $C \defeq \left( \tilde{C} +  \max\{ C^0,C^1 \} \right)^{1/p}$.
\end{proof}

To get the second assertion of \autoref{ass:2} we still need to connect the level-sets $\level{\F_{[d]}^{\alpha,\vd}}{t}$ and $\level{\F_{[d]}^{\alpha,v^0}}{t}$, see \cite[Lemma 3.5]{CiaMelSch19}. 
\begin{lemma}
	Let \autoref{ass:gen} be satisfied.
	For every $w \in W^{s,p}(\Omega,K)$ and $v_1, v_2 \in L^p(\Sigma, \R^M)$, where $M$ is the associated dimension to $\op = \radon$ or $\op = \ray$ as described in \autoref{eq:M}, it holds that
	\begin{equation}\label{eq:LS}
	\F_{[d]}^{\alpha,v_1}(w) \leq 2^{p-1} \left(
	\F_{[d]}^{\alpha,v_2}(w) + \|v_2-v_1\|^p_{L^p(\Sigma;\R^M)}
	\right).
	\end{equation}
\end{lemma}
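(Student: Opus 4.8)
The plan is to unfold the definition of $\F_{[d]}^{\alpha,v_1}$ from \autoref{eq:reg} and to exploit the fact that the regularization term $\alpha \Reg_{[\d]}^l(w)$ is \emph{identical} in the functionals associated with $v_1$ and $v_2$, since it does not depend on the data. Consequently only the data-fidelity term needs to be compared. Writing that term as an $L^p$-norm, $\norm{\op[w]-v_1}_{L^p(\Sigma;\R^M)}^p$, I would first decompose $\op[w]-v_1 = (\op[w]-v_2)+(v_2-v_1)$ and apply the triangle inequality in $L^p(\Sigma;\R^M)$ to obtain $\norm{\op[w]-v_1}_{L^p(\Sigma;\R^M)} \leq \norm{\op[w]-v_2}_{L^p(\Sigma;\R^M)}+\norm{v_2-v_1}_{L^p(\Sigma;\R^M)}$.

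Next I would raise both sides to the power $p$ and invoke the elementary convexity estimate $(a+b)^p \leq 2^{p-1}(a^p+b^p)$, valid for $a,b \geq 0$ and $p \geq 1$; this is Jensen's inequality for the convex map $t \mapsto t^p$ and is precisely the estimate already used in \autoref{eq:EWM}. This yields $\norm{\op[w]-v_1}_{L^p(\Sigma;\R^M)}^p \leq 2^{p-1}\bigl(\norm{\op[w]-v_2}_{L^p(\Sigma;\R^M)}^p+\norm{v_2-v_1}_{L^p(\Sigma;\R^M)}^p\bigr)$.

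Finally, I would add $\alpha \Reg_{[\d]}^l(w)$ to both sides and use that $2^{p-1} \geq 1$ for $p \geq 1$, together with the non-negativity of $\Reg_{[\d]}^l$, so that $\alpha \Reg_{[\d]}^l(w) \leq 2^{p-1}\alpha \Reg_{[\d]}^l(w)$. Collecting the fidelity and regularization contributions back into $\F_{[d]}^{\alpha,v_2}(w)$ then produces exactly $\F_{[d]}^{\alpha,v_1}(w) \leq 2^{p-1}\bigl(\F_{[d]}^{\alpha,v_2}(w)+\norm{v_2-v_1}_{L^p(\Sigma;\R^M)}^p\bigr)$, as claimed.

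Since every step is a routine normed-space manipulation relying only on the triangle inequality and convexity of $t \mapsto t^p$, there is no genuine obstacle in this proof. The only point requiring (minor) care is the final recombination, where the regularization term must be absorbed into the factor $2^{p-1}$ rather than left unscaled; this is harmless precisely because $2^{p-1} \geq 1$ and $\Reg_{[\d]}^l(w) \geq 0$, so no information is lost in the bound.
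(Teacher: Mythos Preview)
Your argument is correct and is precisely the standard route one would expect: triangle inequality in $L^p(\Sigma;\R^M)$, the convexity bound $(a+b)^p\le 2^{p-1}(a^p+b^p)$, and absorption of the data-independent regularizer via $2^{p-1}\ge 1$. The paper itself does not spell out a proof of this lemma but simply refers to \cite[Lemma~3.5]{CiaMelSch19}; your write-up supplies exactly the elementary verification that reference encapsulates.
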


The previous lemma and \autoref{th:Coercivity} shows that Item \ref{itm:F2} of \autoref{ass:2} is valid.
The third item is satisfied as well, what we see using $w \equiv const$ to get that $\F^{\alpha,v^0}_{[\d]} \not \equiv \infty$. 
The first item was shown in\autoref{th:RadProp} and in \autoref{th:RayProp}.
Thus, all items of \autoref{ass:2} are fulfilled and according to \cite{CiaMelSch19} we get existence of a minimizer, 
as well as a stability and convergence result.

\begin{corollary} \label{le:ExStCo}
	Let \autoref{ass:gen} be satisfied. The functional $\F_{[d]}^{\alpha,\vd}$ defined in \autoref{eq:reg} with $\op = \radon$ and $\op = \ray$, satisfies 
	the assertions of \autoref{th:ex}. That is, 
	$\F_{[d]}^{\alpha,\vd}$ attains a minimizer and fulfills a stability as well as a convergence result.
\end{corollary}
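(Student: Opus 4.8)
The plan is to verify that all three items of \autoref{ass:2} hold for $\op = \radon$ and for $\op = \ray$; once this is done, \autoref{th:ex} (equivalently, the corresponding result of \cite{CiaMelSch19}) applies verbatim and yields existence of a minimizer together with the stability and convergence statements, which is precisely the claim of the corollary. So the corollary is in essence an assembly result, and the work is the bookkeeping needed to match the abstract hypotheses.

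Two of the three items are immediate from what has already been established. Item \ref{itm:F1}---well-definedness of $\op$ and sequential continuity with respect to the weak topology of $W^{s,p}(\Omega;\R^m)$---is exactly \autoref{th:RadProp} for $\op = \radon$ and \autoref{th:RayProp} for $\op = \ray$; in both cases the ingredients are the $L^p$-continuity estimates \autoref{eq:RadonContinuity} and \autoref{eq:RayContinuity} together with the compact embedding $W^{s,p}(\Omega;\R^m) \subseteq L^p(\Omega;\R^m)$ of \autoref{lem:inclusion}. For Item \ref{itm:F3} I would simply exhibit one admissible function of finite energy: picking any constant $w \equiv c$ with $c \in K$ (which is nonempty by \autoref{ass:gen}), the regularizer vanishes since $\d(c,c)=0$, while $\op[w] \in L^p(\Sigma;\R^M)$ by the continuity estimate, so the fidelity term is finite against $v^0 \in L^p(\Sigma;\R^M)$; hence $\F_{[d]}^{\alpha,v^0}(w) < \infty$ and some level set is nonempty.

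The substantive step is Item \ref{itm:F2}, weak sequential pre-compactness of the level sets $\level{\F_{[d]}^{\alpha,v^0}}{t}$, and this is where I expect the only real obstacle to lie. I would invoke the coercivity of \autoref{th:Coercivity}, which provides for each $t$ a uniform bound $\norm{w}_{W^{s,p}(\Omega;\R^m)} \leq C$ on every level set of $\F_{[d]}^{\alpha,\vd}$. The subtlety is that \autoref{th:Coercivity} is phrased for the \emph{noisy} functional $\F_{[d]}^{\alpha,\vd}$, whereas \autoref{ass:2} demands pre-compactness for the \emph{exact} functional $\F_{[d]}^{\alpha,v^0}$. I would bridge this with \autoref{eq:LS}: applied with $v_1 = v^0$ and $v_2 = \vd$, it shows that $\level{\F_{[d]}^{\alpha,v^0}}{t}$ is contained in a level set of $\F_{[d]}^{\alpha,\vd}$ of enlarged radius $2^{p-1}\bigl(t + \norm{\vd - v^0}_{L^p(\Sigma;\R^M)}^p\bigr)$, so the coercivity bound transfers. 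Since $W^{s,p}(\Omega;\R^m)$ is reflexive (again \autoref{lem:inclusion}), every $W^{s,p}$-bounded sequence has a weakly convergent subsequence, and the sequential closedness of $W^{s,p}(\Omega;K)$ from the same lemma keeps the weak limit inside the admissible set---which is exactly weak sequential pre-compactness.

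With all three items of \autoref{ass:2} verified for both transforms, the conclusion follows by direct appeal to \autoref{th:ex}. Thus the analytic content is carried entirely by \autoref{th:RadProp}, \autoref{th:RayProp} and \autoref{th:Coercivity}, and the proof of the corollary reduces to checking \autoref{ass:2} plus the noisy-to-exact transfer via \autoref{eq:LS}.
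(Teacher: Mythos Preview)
Your proposal is correct and follows essentially the same route as the paper: verify Item~\ref{itm:F1} via \autoref{th:RadProp}/\autoref{th:RayProp}, Item~\ref{itm:F3} via a constant function, and Item~\ref{itm:F2} by combining the coercivity of \autoref{th:Coercivity} with the level-set transfer \autoref{eq:LS} and the reflexivity from \autoref{lem:inclusion}. One trivial slip: to embed $\level{\F_{[d]}^{\alpha,v^0}}{t}$ into a level set of $\F_{[d]}^{\alpha,\vd}$ you need \autoref{eq:LS} with $v_1 = \vd$ and $v_2 = v^0$, not the other way round, but your stated conclusion is the correct one.
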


In the following we discuss different choices of $K$ and associated metrics $\d$.

%---------------------------------------------------------------
%---------------------------------------------------------------

\subsection{Examples of particular sets $K$ and metrics $\d$} 
\label{ss:k} 
%---------------------------------------------------------------------------
%---------------------------------------------------------------------------
For particular choices of the set $K$ and associated metrics $\d$ we show that the corresponding functional $\F_{[d]}^{\alpha,\vd}$ 
(as defined in \autoref{eq:reg}) is well-defined, attains a minimizer and is stable and convergent in the sense of \autoref{le:ExStCo}.
For the particular choices of $K$ and $\d$ we present numerical results in the subsequent \autoref{sec:5}.

%---------------------------------------------------------------------------
%---------------------------------------------------------------------------
\subsubsection{$\sphere$-valued data} 
\label{subsec: S1data} 
%---------------------------------------------------------------------------
%---------------------------------------------------------------------------
In this subsection we consider $$K \defeq \sphere \subseteq \R^2$$
with associated metric
\begin{equation} \label{eq:dS}
\boxed{
\dS(x,y) \defeq \arccos(x^T y) \text{ for all } x,y \in \sphere.}
\end{equation}
We start by making a few technical assumptions.
\begin{assumption} \label{as:sp}
	Let $\emptyset \neq \Omega \subset \R^n$ be a bounded and simply connected domain and let either 
	\begin{enumerate}
		\item $s \in (0, \infty)$ and $p \in (1,\infty)$ if $n = 1$ or
		\item $s \in (0,1)$, $p \in (1,\infty)$ satisfy  
		$sp < 1 \text{ or } sp \geq n$ if $n \geq 2$ \label{itm:sp}.  		
	\end{enumerate}
\end{assumption}
Under this assumption we have the following identity:
\begin{lemma}\cite[Theorem 1 \& 2]{BouBreMir00b} \label{lem:BBM}
	Let \autoref{as:sp} be satisfied. Then	
	every function $w \in W^{s,p}(\Omega;\sphere)$ can be represented as $w = \e^{\i u}$ 
	with $u \in W^{s,p}(\Omega;\R)$. The function $u$ is called a \emph{lifting} of $w$.
\end{lemma}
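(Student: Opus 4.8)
The plan is to follow Bourgain--Brezis--Mironescu and treat separately the two regimes permitted by \autoref{as:sp}: the \emph{rough} regime $sp < 1$ and the \emph{smooth} regime $sp \geq n$ (the case $n=1$ falling into the latter, since there $s$ may be arbitrarily large). The intermediate range $1 \leq sp < n$ is excluded precisely because lifting genuinely fails there, the standard obstruction being maps of nonzero degree such as $x \mapsto x/\enorm{x}$.

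In the regime $sp \geq n$ the fractional Sobolev embedding gives $W^{s,p}(\Omega;\R) \hookrightarrow \mathrm{VMO}(\Omega)$, continuously into $C^0(\overline\Omega)$ when $sp > n$. I would first lift smooth maps: for $w=(w_1,w_2) \in C^\infty(\Omega;\sphere)$ the $1$-form $w_1\,\nabla w_2 - w_2\,\nabla w_1$ is closed (because $w_1^2+w_2^2=1$ forces $\nabla w_1$ and $\nabla w_2$ to be proportional), hence exact on the simply connected $\Omega$; a primitive $u$ with $\nabla u = w_1\,\nabla w_2 - w_2\,\nabla w_1$ then satisfies $w = \e^{\i u}$, as one checks by differentiating $\e^{-\i u}(w_1+\i w_2)$ along curves. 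Next I would use density of smooth maps in $W^{s,p}(\Omega;\sphere)$ (valid in this regime) to approximate the given $w$ by $w_k \in C^\infty(\Omega;\sphere)$ with $w_k \to w$, lifting each as $w_k = \e^{\i u_k}$. The crux is to show that, after fixing the additive $2\pi\Z$ ambiguity by a single constant, $(u_k)$ is Cauchy in $W^{s,p}(\Omega;\R)$; the $\mathrm{VMO}$/continuity control is exactly what allows one to pin this constant down globally and to estimate $\abs{u_k-u_l}_{W^{s,p}}$ by $\abs{w_k-w_l}_{W^{s,p}}$. The limit $u$ is then the desired lifting.

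In the regime $sp < 1$ I would instead construct the lift directly. Fix a measurable branch of the argument $\Theta:\sphere \to (-\pi,\pi]$ and set $u \defeq \Theta\circ w$, so that $w = \e^{\i u}$ pointwise and $u \in L^p(\Omega;\R)$ by boundedness. It remains to bound the Gagliardo seminorm $\abs{u}_{W^{s,p}(\Omega;\R)}$ of \autoref{eq:s_seminorm} by $\abs{w}_{W^{s,p}(\Omega;\R^2)}$. Away from the branch cut $\Theta$ is Lipschitz, so $\enorm{u(x)-u(y)} \leq C\enorm{w(x)-w(y)}$ there; the difficulty lies with the set $S \subset \Omega\times\Omega$ of pairs whose images straddle the cut, where $\enorm{u(x)-u(y)}$ can jump to nearly $2\pi$ while $\enorm{w(x)-w(y)}$ stays small. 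The decisive estimate is to prove that $\int_{\Omega\times\Omega} \enorm{x-y}^{-(n+sp)}\,\chi_S(x,y)\dxy$ is finite, and this is exactly where $sp<1$ enters: for such exponents the characteristic function of a set of finite perimeter lies in $W^{s,p}(\Omega;\R)$, which is the mechanism making the branch-cut contribution integrable.

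I expect the genuinely hard step to be this low-regularity case: selecting a good measurable lift and controlling its branch-cut contribution rests on the delicate interplay between $sp<1$ and the $W^{s,p}$-integrability of jump sets, and the naive pointwise bound is by itself insufficient. In the complementary regime $sp \geq n$ the main subtlety is instead the global bookkeeping of the $2\pi\Z$ constant across $\Omega$, which is precisely the point at which simple connectivity of $\Omega$ is used.
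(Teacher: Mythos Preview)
The paper does not prove this lemma at all: it is stated with the attribution \cite[Theorem 1 \& 2]{BouBreMir00b} and no proof is given, the result being imported wholesale from Bourgain--Brezis--Mironescu. There is therefore nothing in the paper to compare your argument against; what you have written is a sketch of the proof from the cited reference rather than of anything the authors supply.

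That said, your outline of the $sp \geq n$ case is essentially the standard one and is fine. Your sketch of the $sp < 1$ case, however, has a genuine gap. Setting $u = \Theta \circ w$ for a fixed measurable branch $\Theta$ and then trying to show that the branch-cut contribution $\int_{S} \enorm{x-y}^{-(n+sp)}\dxy$ is finite does not work as stated: the set $S = \{(x,y): w(x),w(y) \text{ straddle the cut}\}$ is determined by $w$, which is merely in $W^{s,p}$, and there is no reason its trace on the diagonal should behave like a finite-perimeter set. The Bourgain--Brezis--Mironescu argument does not fix a single branch cut and estimate $S$ directly; rather, it averages over all possible cut locations (equivalently, over rotations of $\sphere$) and uses Fubini to show that for \emph{some} choice of cut the resulting lift has controlled seminorm. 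The ``characteristic functions of finite-perimeter sets lie in $W^{s,p}$ when $sp<1$'' heuristic is morally related but is not the actual mechanism, and invoking it as you do would leave the proof incomplete.
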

Moreover, we make the following definition.

\begin{definition}\label{def:lF}
	Let $\dS$ be defined as in \autoref{eq:dS}. We call
	\begin{equation}\label{eq:liftedFunc} 
	\tilde{\F}_{[\dS]}^{\alpha,\vd}(u) \defeq 
	\int_\Sigma \enorm{\op[\e^{\i u}](r,\varphi)- \vd(r,\varphi)}^2 \drp
	+ \alpha \Reg_{[\dS]}^1(\e^{\i u}) \text{ for } u \in W^{s,p}(\Omega;\R).
	\end{equation}
	the \emph{lifted functional} of $\F_{[\dS]}^{\alpha,\vd}$ (which is defined on $W^{s,p}(\Omega;\sphere)$). 
\end{definition}

The following lemma summarizes elementary facts and connects $\tilde{\F}_{[\dS]}^{\alpha,\vd}$ and $\F_{[\dS]}^{\alpha,\vd}$, In particular it reveals 
that both functionals are well-defined, and attain a minimizer:
\begin{lemma}\label{lem:FFT}
	\begin{enumerate}
	    \item \label{lem:WspLifting} Let $u \in W^{s,p}(\Omega;\R)$. Then $\e^{\i u} \in W^{s,p}(\Omega;\sphere)$.
		\item \label{itm: dSequiv} $\dS$ is an equivalent metric to $\d_{\R^2}\big|_{\sphere \times \sphere}$.
		\item Minimization of the functional $\F_{[\dS]}^{\alpha,\vd}$ over $W^{s,p}(\Omega;\sphere)$ is well--posed, stable and convergent in the sense 
		of \autoref{th:ex}.
		\item $\tilde{\F}_{[\dS]}^{\alpha,\vd}(u) \defeq \F_{[\dS]}^{\alpha,\vd}(\e^{\i u})$ for all $u \in W^{s,p}(\Omega;\R)$. 
		\label{itm:FT}
		\item Let \autoref{as:sp} be satisfied. Then $\tilde{\F}_{[\dS]}^{\alpha,\vd}$ is well-defined and attains a minimizer on $W^{s,p}(\Omega;\R)$. 
	\end{enumerate}
\end{lemma}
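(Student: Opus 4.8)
The plan is to dispatch the five assertions in order, isolating all the genuine content in the last one, where the Bourgain--Brezis--Mironescu lifting theorem (\autoref{lem:BBM}) is used. For (i), I would identify $\e^{\i u}$ with $(\cos u,\sin u)\in\R^2$, so that $\enorm{\e^{\i u(x)}}=1$ a.e.\ and hence $\e^{\i u}\in L^p(\Omega;\sphere)$ with $\norm{\e^{\i u}}_{L^p(\Omega;\R^2)}=\abs{\Omega}^{1/p}$. The fractional seminorm is then controlled by the elementary estimate $\enorm{\e^{\i a}-\e^{\i b}}=2\abs{\sin((a-b)/2)}\leq\abs{a-b}$ for $a,b\in\R$, which gives the pointwise bound
\[
\frac{\enorm{\e^{\i u(x)}-\e^{\i u(y)}}^p}{\enorm{x-y}^{n+ps}}\leq\frac{\enorm{u(x)-u(y)}^p}{\enorm{x-y}^{n+ps}},
\]
so that integrating over $\Omega\times\Omega$ yields $\abs{\e^{\i u}}_{W^{s,p}(\Omega;\R^2)}\leq\abs{u}_{W^{s,p}(\Omega;\R)}<\infty$; combined with the $L^p$ bound this shows $\e^{\i u}\in W^{s,p}(\Omega;\sphere)$.

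For (ii), I would parametrise $a,b\in\sphere$ by the angle $\theta=\dS(a,b)=\arccos(a^Tb)\in[0,\pi]$ between them, for which the chord length is $\enorm{a-b}=2\sin(\theta/2)$. The inequality $2\sin(\theta/2)\leq\theta$ on $[0,\pi]$ gives the \emph{normalized} lower bound $\enorm{a-b}\leq\dS(a,b)$, while the monotonicity of $\theta\mapsto\theta/(2\sin(\theta/2))$ on $(0,\pi]$ (whose supremum $\pi/2$ is attained at $\theta=\pi$) gives $\dS(a,b)\leq\frac{\pi}{2}\enorm{a-b}$. Thus $\dS$ is normalized equivalent to $\d_{\R^2}\big|_{\sphere\times\sphere}$ in the sense of \autoref{eq:MI} with $C_u=\pi/2$, and since $\sphere$ is closed in $\R^2$, \autoref{ass:gen} is satisfied for $K=\sphere$ and $\d=\dS$.

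Assertions (iii) and (iv) are then essentially free. Because (ii) verifies \autoref{ass:gen} for $K=\sphere$, $\d=\dS$, \autoref{le:ExStCo} applies directly to $\F_{[\dS]}^{\alpha,\vd}$ and yields well-posedness, stability and convergence exactly as in \autoref{th:ex}; this is (iii). Assertion (iv) is the definitional identity obtained by substituting $w=\e^{\i u}$ into \autoref{eq:reg} and reading off \autoref{def:lF}, so nothing beyond comparing the two formulas is required.

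The substance lies in (v), and the main (indeed only) obstacle is the surjectivity of the lifting map $u\mapsto\e^{\i u}$ from $W^{s,p}(\Omega;\R)$ onto $W^{s,p}(\Omega;\sphere)$ --- precisely the content of \autoref{lem:BBM}, which is the sole reason \autoref{as:sp} is imposed. Well-definedness of $\tilde{\F}_{[\dS]}^{\alpha,\vd}$ on $W^{s,p}(\Omega;\R)$ follows from (i) and (iv): for any $u$, $\e^{\i u}\in W^{s,p}(\Omega;\sphere)$, so $\F_{[\dS]}^{\alpha,\vd}(\e^{\i u})$ is finite. For the minimizer, (iii) provides a minimizer $w^*\in W^{s,p}(\Omega;\sphere)$ of $\F_{[\dS]}^{\alpha,\vd}$, and under \autoref{as:sp}, \autoref{lem:BBM} yields a lifting $u^*\in W^{s,p}(\Omega;\R)$ with $\e^{\i u^*}=w^*$. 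Then for every $u\in W^{s,p}(\Omega;\R)$, (i) gives $\e^{\i u}\in W^{s,p}(\Omega;\sphere)$, so by (iv) and the minimality of $w^*$,
\[
\tilde{\F}_{[\dS]}^{\alpha,\vd}(u)=\F_{[\dS]}^{\alpha,\vd}(\e^{\i u})\geq\F_{[\dS]}^{\alpha,\vd}(w^*)=\F_{[\dS]}^{\alpha,\vd}(\e^{\i u^*})=\tilde{\F}_{[\dS]}^{\alpha,\vd}(u^*),
\]
showing that $u^*$ minimizes $\tilde{\F}_{[\dS]}^{\alpha,\vd}$; equivalently, the lifting identity gives $\inf_u\tilde{\F}_{[\dS]}^{\alpha,\vd}=\inf_w\F_{[\dS]}^{\alpha,\vd}$ and transports the minimizer of (iii) back to $W^{s,p}(\Omega;\R)$. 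The one point to flag is that this transport hinges entirely on \emph{existence} of the lifting, which fails without the dimensional/exponent restriction in \autoref{as:sp}\ (there are topological obstructions when it is violated); the easy inclusion (i) alone only gives $\inf_u\tilde{\F}_{[\dS]}^{\alpha,\vd}\geq\inf_w\F_{[\dS]}^{\alpha,\vd}$.
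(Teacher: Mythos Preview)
Your proof is correct and follows essentially the same approach as the paper for items (i)--(iv): the paper invokes the same inequality $\enorm{\e^{\i a}-\e^{\i b}}\leq\abs{a-b}$ for (i), states the same bounds $\enorm{x-y}\leq\dS(x,y)\leq\frac{\pi}{2}\enorm{x-y}$ for (ii), and derives (iii)--(iv) exactly as you do from \autoref{le:ExStCo} and \autoref{def:lF}. For (v) the paper simply refers to \cite[Lemma~6.6]{CiaMelSch19}, whereas you spell out the argument explicitly via the lifting theorem (\autoref{lem:BBM}); your argument is correct and is precisely the mechanism behind that citation.
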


\begin{proof}
	\begin{enumerate}
	  \item The proof of the first item is a direct consequence of the inequality $|\e^{\i a} - \e^{\i b}| \leq |a-b|, \ a,b \in \R$.
		\item From elementary calculations we obtain that 
		$|x-y| \leq \dS(x,y) \leq \frac{\pi}{2}|x-y|$ for all $x,y \in \sphere \subseteq \R^2.$
		\item The third item follows from the first item and \autoref{le:ExStCo}.
		\item This is true by \autoref{def:lF} and the definition of $\F_{[\dS]}^{\alpha,\vd}$ in \autoref{eq:reg}.  
		\item For the proof of the last item we refer to \cite[lemma 6.6]{CiaMelSch19}.
	\end{enumerate}
\end{proof}

In the following we show that minimization of the lifted functional $\tilde{\F}_{[\dS]}^{\alpha,\vd}$ is an equivalent 
method to minimizing $\F_{[\dS]}^{\alpha,\vd}$. The advantage of minimizing over $W^{s,p}(\Omega;\R)$ is that it forms a 
\emph{vector space} (contrary to the \emph{set} $W^{s,p}(\Omega;\sphere)$). 
We make use of this in the numerical examples in \autoref{subsec:Ex1}.

\begin{lemma} \label{le:equiv}
	Let \autoref{as:sp} be satisfied. 
	\begin{enumerate}
		\item If $u^* \in \mathrm{argmin}_{u \in W^{s,p}(\Omega;\R)} \tilde{\F}_{[\dS]}^{\alpha,\vd}(u)$ then $w^* \defeq \e^{\i u^*} \in W^{s,p}(\Omega;\sphere)$ is a minimizer of $\F_{[\dS]}^{\alpha,\vd}$, that is  $w^* \in \mathrm{argmin}_{w \in W^{s,p}(\Omega;\sphere)} \F_{[\dS]}^{\alpha,\vd}(w)$.
		\item Let $w^* \in \mathrm{argmin}_{w \in W^{s,p}(\Omega;\sphere)} \F_{[\dS]}^{\alpha,\vd}(w)$. Then there exist a lifting $u^* \in W^{s,p}(\Omega;\R), \ w^* = \e^{\i u^*},$ such that $u^*$ is a minimizer of $\tilde{\F}_{[\dS]}^{\alpha,\vd}$, that is 
		$u^* \in \mathrm{argmin}_{u \in W^{s,p}(\Omega;\R)} \tilde{\F}_{[\dS]}^{\alpha,\vd}(u)$.	
	\end{enumerate}
\end{lemma}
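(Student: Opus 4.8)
The plan is to exploit the value-preserving identity $\tilde{\F}_{[\dS]}^{\alpha,\vd}(u) = \F_{[\dS]}^{\alpha,\vd}(\e^{\i u})$ from \autoref{lem:FFT}\,\ref{itm:FT}, together with the surjectivity of the lifting map $u \mapsto \e^{\i u}$ from $W^{s,p}(\Omega;\R)$ onto $W^{s,p}(\Omega;\sphere)$ guaranteed by \autoref{lem:BBM}. Informally, this map transports the lifted functional to the original one without changing its value and hits every admissible competitor, so the two minimization problems share the same infimum and their minimizers are in correspondence.

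For the first assertion, I would take an arbitrary $w \in W^{s,p}(\Omega;\sphere)$ and use \autoref{lem:BBM} to write $w = \e^{\i u}$ for some $u \in W^{s,p}(\Omega;\R)$. The identity then yields $\F_{[\dS]}^{\alpha,\vd}(w) = \tilde{\F}_{[\dS]}^{\alpha,\vd}(u) \geq \tilde{\F}_{[\dS]}^{\alpha,\vd}(u^*) = \F_{[\dS]}^{\alpha,\vd}(\e^{\i u^*}) = \F_{[\dS]}^{\alpha,\vd}(w^*)$, where the inequality uses that $u^*$ minimizes $\tilde{\F}_{[\dS]}^{\alpha,\vd}$ and the final equalities use the identity again. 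Since $w^* = \e^{\i u^*}$ lies in $W^{s,p}(\Omega;\sphere)$ by \autoref{lem:FFT}\,\ref{lem:WspLifting} and $w$ was arbitrary, $w^*$ is a minimizer of $\F_{[\dS]}^{\alpha,\vd}$.

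The second assertion runs symmetrically. Starting from a minimizer $w^*$ of $\F_{[\dS]}^{\alpha,\vd}$, \autoref{lem:BBM} produces a lifting $u^*$ with $w^* = \e^{\i u^*}$. For any competitor $u \in W^{s,p}(\Omega;\R)$, the membership $\e^{\i u} \in W^{s,p}(\Omega;\sphere)$ from \autoref{lem:FFT}\,\ref{lem:WspLifting} makes $\e^{\i u}$ an admissible competitor for $\F_{[\dS]}^{\alpha,\vd}$, so $\tilde{\F}_{[\dS]}^{\alpha,\vd}(u) = \F_{[\dS]}^{\alpha,\vd}(\e^{\i u}) \geq \F_{[\dS]}^{\alpha,\vd}(w^*) = \tilde{\F}_{[\dS]}^{\alpha,\vd}(u^*)$, which shows that $u^*$ minimizes $\tilde{\F}_{[\dS]}^{\alpha,\vd}$.

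Once the two structural facts are in hand the argument is purely mechanical, so the only step that genuinely requires care — and the one I regard as the crux — is the appeal to \autoref{lem:BBM} to lift an \emph{arbitrary} element of $W^{s,p}(\Omega;\sphere)$. This is precisely where \autoref{as:sp} enters, since the existence of a global $W^{s,p}$-lifting can fail for fractional orders outside the admissible range of $(s,p)$; everything else follows from the value-preserving identity.
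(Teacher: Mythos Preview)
Your proof is correct and follows essentially the same approach as the paper: both arguments combine the value-preserving identity from \autoref{lem:FFT}\,\ref{itm:FT} with the lifting result \autoref{lem:BBM} (whose applicability is precisely what \autoref{as:sp} guarantees) to transfer minimizers back and forth. Your treatment of the second item is in fact more explicit than the paper's, which simply states that it is done analogously.
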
	

\begin{proof}
	\begin{enumerate}
		\item We first remark that $w^* \in W^{s,p}(\Omega;\sphere)$ due to \autoref{lem:FFT} \autoref{lem:WspLifting}. \\
		Now, if $u^*$ is a minimizer of $\tilde{\F}_{[\dS]}^{\alpha,\vd}$ we have that
		\begin{equation*}
		\tilde{\F}_{[\dS]}^{\alpha,\vd}(u^*) \leq \tilde{\F}_{[\dS]}^{\alpha,\vd}(v) \quad \forall v \in W^{s,p}(\Omega;\R).
		\end{equation*} 
		In particular for $w^* = \e^{\i u^*}$ it holds that
		\begin{equation}\label{eq:MinOfLifting}
		\F_{[\dS]}^{\alpha,\vd}(w^*) \leq  \F_{[\dS]}^{\alpha,\vd}(\e^{\i v}) \quad \forall v \in W^{s,p}(\Omega;\R)
		\end{equation}
		by definition of $\tilde{\F}_{[\dS]}^{\alpha,\vd}$, see \autoref{lem:FFT} \autoref{itm:FT}. \\
		Now, let $w \in W^{s,p}(\Omega;\sphere)$ be arbitrary. Due to \autoref{as:sp} we have that \autoref{lem:BBM} holds true and that there exists a lifting $v_w \in W^{s,p}(\Omega;\R)$ such that $w = \e^{\i  v_w}$. 
		\autoref{eq:MinOfLifting} is in particular valid for $v = v_w$. Using this and again \autoref{lem:FFT} \autoref{itm:FT} we get that 
		$$\F_{[\dS]}^{\alpha,\vd}(w^*) \leq \F_{[\dS]}^{\alpha,\vd}(\e^{\i v_w}) = \F_{[\dS]}^{\alpha,\vd}(w),$$
		that is $w^* \in \mathrm{argmin}_{w \in W^{s,p}(\Omega;\sphere)} \F_{[\dS]}^{\alpha,\vd}(w)$.
		\item The proof of the second item is done analogously.
	\end{enumerate}
\end{proof}

%---------------------------------------------------------------------------
%---------------------------------------------------------------------------
\subsubsection{Vector field data}
\label{subsec:vfdata}
%---------------------------------------------------------------------------
%---------------------------------------------------------------------------
In this subsection we consider for $r > \varepsilon > 0$ fixed the set 
$$K \defeq \mathcal{B}_r(0) \setminus \mathcal{B}^\circ_\varepsilon(0) \subseteq \R^2,$$ 
where $\mathcal{B}_r(0)$ denotes the \emph{closed} ball of radius $r$ and center $0$ and $\mathcal{B}^\circ_\varepsilon(0)$ denotes 
the open ball of radius $\varepsilon$ and center $0$. We have to exclude an $\varepsilon$-ball around $0$ because otherwise
defining a suitable distance $\d$ on $K$ destroys the equivalence of $\d$ and $\d_{\R^2}\big|_{K \times K}$. 

An element $x \in K$ can be represented by its $1$-normalized orientation $\Theta \in [0,1)$ 
(that is its angle divided by $2\pi$) and by its length $l = \enorm{x} \in [\varepsilon,r]$. That is, 
\begin{center}
	to every $x \in K$ there exists 
	$(\Theta,l) \in  [0,1) \times [\varepsilon, r]$ such that $x= l \e^{2 \pi \i \Theta}$. 
\end{center}

\begin{lemma} \label{lem:ALM}
	Let $\gamma \geq 0, p > 1$ and define
	\begin{equation}\label{eq:vfMetrik}
		\boxed{\d(x_1,x_2) = \big( \dS^{p}(\e^{\i 2 \pi \Theta_1},\e^{\i 2 \pi \Theta_2}) + \gamma \enorm{ l_1- l_2}^p\big)^{1/p}, 
		\qquad x_1,x_2 \in K.}
	\end{equation}
	Then $d$ defines a metric on $K$ which is equivalent to $\d_{\R^2}\big|_{K \times K}$, i.e. there exist constants $C_u > c_l > 0$ such that
	\begin{equation*}
		c_l \enorm{x_1-x_2} \leq \d(x_1,x_2) \leq C_u \enorm{x_1-x_2}, \ x_1,x_2 \in K.
	\end{equation*}
	In particular $d/c_l$ is a normalized equivalent metric in the sense of \autoref{ass:gen}.
\end{lemma}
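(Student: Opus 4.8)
The plan is to reduce the asserted equivalence to a single geometric comparison, and to treat the metric axioms separately. Throughout write $x_j = l_j \e^{\i 2\pi\Theta_j}$ with $l_j = \enorm{x_j} \in [\varepsilon,r]$, and abbreviate the \emph{radial part} $a \defeq \enorm{l_1-l_2} = \big|\enorm{x_1}-\enorm{x_2}\big|$ and the \emph{angular part} $b \defeq \dS(\e^{\i 2\pi\Theta_1},\e^{\i 2\pi\Theta_2}) = \dS\!\big(x_1/\enorm{x_1},x_2/\enorm{x_2}\big)$; both are well defined on $K$ precisely because $0\notin K$. Since $\d(x_1,x_2)^p = b^p + \gamma\,a^p$ is an $\ell^p$-combination of the two nonnegative numbers $a,b$, the equivalence of all norms on $\R^2$ yields
$$ c(p,\gamma)\,\sqrt{a^2+b^2} \;\le\; \d(x_1,x_2) \;\le\; C(p,\gamma)\,\sqrt{a^2+b^2}, $$
with the lower constant positive as soon as $\gamma>0$. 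Hence everything comes down to proving the purely geometric equivalence $\enorm{x_1-x_2}\asymp\sqrt{a^2+b^2}$ on $K$.

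For the metric axioms I would argue that $\d$ is the $\ell^p$-combination of the two \emph{pseudometrics} $d_{\mathrm{ang}}(x_1,x_2)\defeq b$ and $d_{\mathrm{rad}}(x_1,x_2)\defeq a$ on $K$, each being the pullback of a genuine metric (the geodesic metric on $\sphere$, resp.\ the Euclidean metric on $[\varepsilon,r]$) under the maps $x\mapsto x/\enorm{x}$ and $x\mapsto\enorm{x}$. Symmetry and nonnegativity are immediate. For the triangle inequality, apply the pseudometric triangle inequalities componentwise and then Minkowski's inequality for the $\ell^p$-norm on $\R^2$. Finally $\d(x_1,x_2)=0$ forces $a=b=0$ (here $\gamma>0$ is used), i.e.\ $\enorm{x_1}=\enorm{x_2}$ and $x_1/\enorm{x_1}=x_2/\enorm{x_2}$, hence $x_1=x_2$; so the two pseudometrics jointly separate points and $\d$ is a genuine metric.

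It remains to establish $\enorm{x_1-x_2}\asymp\sqrt{a^2+b^2}$. The upper bound is elementary: by the triangle inequality in $\R^2$ together with the chord-versus-arc estimate $\enorm{\e^{\i 2\pi\Theta_1}-\e^{\i 2\pi\Theta_2}}\le b$ (already recorded in \autoref{lem:FFT}),
$$ \enorm{x_1-x_2}\le l_1\,\enorm{\e^{\i 2\pi\Theta_1}-\e^{\i 2\pi\Theta_2}}+\enorm{l_1-l_2}\le r\,b+a\le\sqrt{r^2+1}\,\sqrt{a^2+b^2}. $$
For the lower bound I would invoke the law of cosines in the form
$$ \enorm{x_1-x_2}^2 = (l_1-l_2)^2 + 2\,l_1 l_2\,(1-\cos b) = a^2 + 4\,l_1 l_2\,\sin^2(b/2), $$
where $b\in[0,\pi]$ is the actual angle between the unit vectors. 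Jordan's inequality $\sin(b/2)\ge b/\pi$ on $[0,\pi]$ together with $l_1 l_2\ge\varepsilon^2$ then gives $\enorm{x_1-x_2}^2\ge a^2+\tfrac{4\varepsilon^2}{\pi^2}b^2\ge\min\{1,4\varepsilon^2/\pi^2\}(a^2+b^2)$. Combining the two displays with the $\ell^p$-comparison of the first paragraph produces the constants $c_l,C_u$. The main obstacle — and the only place where the hypotheses $\varepsilon>0$ and $\gamma>0$ are genuinely needed — is this lower bound: without the $\varepsilon$-ball excised the factor $l_1 l_2$ can degenerate to $0$, so the angular separation $b$ would no longer be controlled by $\enorm{x_1-x_2}$ (points of equal direction but tiny radius become Euclidean-close while their $\d$-distance stays bounded below), exactly as anticipated in the discussion preceding the lemma.
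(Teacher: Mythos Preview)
Your argument is correct and follows the same overall plan as the paper: decompose into radial and angular parts $a,b$, verify the metric axioms via Minkowski, and establish a two-sided comparison between $\d$ and the Euclidean distance. The one substantive difference is in the direction $\d\le C_u\enorm{x_1-x_2}$: you obtain it from the law of cosines, $\enorm{x_1-x_2}^2=a^2+4l_1l_2\sin^2(b/2)\ge a^2+\tfrac{4\varepsilon^2}{\pi^2}b^2$, which makes the role of $\varepsilon$ fully explicit, whereas the paper simply asserts the existence of a constant $c_\varepsilon$ with $\enorm{\e^{\i 2\pi\Theta_1}-\e^{\i 2\pi\Theta_2}}\le c_\varepsilon\enorm{x_1-x_2}$ and does not justify it further. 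Your route is cleaner here. You also rightly flag that $\gamma>0$ is needed both for point-separation and for the lower constant $c_l$ to be positive; the lemma as stated allows $\gamma\ge 0$, but the paper's proof tacitly requires $\gamma>0$ as well (its lower-bound computation in fact writes $\d(x_1,x_2)=(d_{\sphere}^p+\enorm{l_1-l_2}^p)^{1/p}$, effectively setting $\gamma=1$).
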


\begin{proof}
	We start by showing that $\d$ indeed fulfills the axioms of a metric. Let therefore $x_1,x_2,x_3 \in K$. 
	It is obvious that $\d(x_1,x_1) = 0$ and $\d(x_1,x_2) = \d(x_2,x_1)$. 
	Moreover $\d(x_1,x_2) = 0$ implies $x_1 =x_2$: If $\d(x_1,x_2) = 0$ both summands need to be zero, i.e. $\e^{\i  2 \pi \Theta_1} = \e^{\i  2 \pi \Theta_2}$ and $l_1 = l_2$, which gives $x_1 = x_2$. 
	To prove the triangle inequality we calculate
	\begin{align*}
		\d(x_1,x_3) &= \big( \dS^{p}(\e^{\i  2 \pi \Theta_1},\e^{\i  2 \pi \Theta_3}) + \gamma | l_1- l_3 |^p\big)^{1/p} \\
		&\leq \bigg( \big( \dS(\e^{\i  2 \pi \Theta_1},\e^{\i  2 \pi \Theta_2}) + \dS(\e^{\i  2 \pi \Theta_2},\e^{\i  2 \pi \Theta_3}) \big)^{p} + \big( \gamma | l_1- l_2 | + \gamma | l_2- l_3 | \big)^p  \bigg)^{1/p} \\
		&\leq \big( \dS^{p}(\e^{\i  2 \pi \Theta_1},\e^{\i  2 \pi \Theta_2}) + \gamma | l_1- l_2 |^p\big)^{1/p} 
		+ \big( \dS^{p}(\e^{\i  2 \pi \Theta_2},\e^{\i  2 \pi \Theta_3}) + \gamma | l_2- l_3 |^p\big)^{1/p} \\
		&= \d(x_1,x_2) + \d(x_2,x_3).
	\end{align*}
	
	Now we show the equivalence of $\d$ and $\d_{\R^2}\big|_{K \times K}$.
	We start with the lower bound, where we use \autoref{lem:FFT} \autoref{itm: dSequiv} and Jensen's inequality
	%\cite[Lemma 3.20]{SchGraGroHalLen09}: 
	\begin{align*}
		\enorm{x_1-x_2} &= \enorm{l_1 \e^{i 2 \pi \Theta_1} - l_2 \e^{i 2 \pi \Theta_2}}
		\leq \enorm{ (l_1-l_2) \e^{i 2 \pi \Theta_1} + l_2 (\e^{i 2 \pi \Theta_1} - \e^{i 2 \pi \Theta_2}) } \\
		&\leq \enorm{l_1 - l_2} + l_2 \dS(\e^{\i  2 \pi \Theta_1},\e^{\i  2 \pi \Theta_2}) \\
		& \leq \max\{ 1,l_2 \} \left( 2^{p-1} ( \dS^p(\e^{\i  2 \pi \Theta_1},\e^{\i  2 \pi \Theta_2}) + \enorm{l_1 - l_2}^p ) \right)^{1/p} \\
		&= 2^{\frac{p-1}{p}} \max\{ 1,l_2 \} \d(x_1,x_2),
	\end{align*}
	giving $c_l \defeq \left( 2^{\frac{p-1}{p}} \max\{ 1,l_2 \}  \right)^{-1} > 0$. \\
	For the upper bound we remark that because we exclude $\mathcal{B}^\circ_\varepsilon(0)$ from the set $K$ there exists a constant $c_\varepsilon > 0$ (depending on $\varepsilon$) such that
	$ \dS(\e^{i 2 \pi \Theta_1} , \e^{i 2 \pi \Theta_2}) \leq \frac{\pi}{2} \enorm{\e^{i 2 \pi \Theta_1} -  \e^{i 2 \pi \Theta_2}} \leq \frac{\pi}{2} c_\varepsilon \enorm{l_1 \e^{i 2 \pi \Theta_1} - l_2 \e^{i 2 \pi \Theta_2}} $. 
	Then we compute using the sub-additivity of the p-th root 
	\begin{align*}
		\d(x_1,x_2) &= \big( \dS^{p}(\e^{\i  2 \pi \Theta_1},\e^{\i  2 \pi \Theta_2}) + \gamma | l_1- l_2 |^p\big)^{1/p} \\
		& \leq \dS(\e^{\i  2 \pi \Theta_1},\e^{\i  2 \pi \Theta_2}) + \gamma^{1/p} \enorm{l_1-l_2}  \\
		&\leq \frac{\pi}{2}  \enorm{\e^{\i  2 \pi \Theta_1}- \e^{\i  2 \pi \Theta_2}} 
		+  \gamma^{1/p} \enorm{ \enorm{l_1 \e^{\i  2 \pi \Theta_1}}-  
			\enorm{l_2 \e^{\i  2 \pi \Theta_2}} } \\
		& \leq \frac{\pi}{2} c_\varepsilon \enorm{l_1 \e^{i 2 \pi \Theta_1} - l_2 \e^{i 2 \pi \Theta_2}} + 	\gamma^{1/p}  \enorm{l_1 \e^{i 2 \pi \Theta_1} - l_2 \e^{i 2 \pi \Theta_2}} \\
		& \leq \max\{\frac{\pi}{2} c_\varepsilon, \gamma^{1/p}  \} \enorm{l_1 \e^{i 2 \pi \Theta_1} - l_2 \e^{i 2 \pi \Theta_2}} \\
		& \defeq C_u \enorm{x_1-x_2}.
	\end{align*}
\end{proof}

The previous \autoref{lem:ALM} yields that $K = \mathcal{B}_r(0) \setminus \mathcal{B}^\circ_\varepsilon(0) $ associated with the distance $\d$ in \autoref{eq:vfMetrik} fulfills \autoref{ass:gen} which implies \autoref{le:ExStCo}. Hence the corresponding functional $\F_{[\d]}^{\alpha,\vd}$ defined on $W^{s,p}(\Omega;K)$ is well-defined, attains a minimizer and fulfills a stability and convergence result.

In the next section we present numerical results using the subsets $K$ with associated metrics $\d$ defined above.
%---------------------------------------------------------------------------
%---------------------------------------------------------------------------
\section{Numerical Results}
\label{sec:5}
%---------------------------------------------------------------------------
%---------------------------------------------------------------------------

In this section we present numerical experiments for the reconstruction of (normalized) vector fields 
from Radon and ray transform data by using the regularization method consisting 
in minimizing the functional $\F_{[d]}^{\alpha,\vd}$ introduced in \autoref{eq:reg} with $l=1$
(taking advantage of the numerical efficiency effect of a mollifier) over $W^{s,p}(\Omega;K)$
for the particular choices of the subset $K$ and associated metric $\d$ as discussed in 
\autoref{ss:k}.

%---------------------------------------------------------------------------
%---------------------------------------------------------------------------
\subsection*{Numerical minimization}
%---------------------------------------------------------------------------
%---------------------------------------------------------------------------

For minimizing the functional $\F_{[d]}^{\alpha,\vd}$ with $l=1$ (introduced in \autoref{eq:reg}) 
we use a gradient descent algorithm with a backtracking line search algorithm, which ignores that 
values of the functions to be optimized lie in some set $K$. 
The steepest descent direction is given by the Gâteaux derivative of the functional $\F_{[d]}^{\alpha,\vd}$
defined  on $W^{s,p}(\Omega;\R^m)$.
The implementation is done in $\mathtt{Matlab}$.
The particular choice of the mollifier $\rho$ (see \autoref{ass:gen}) used to define the regularization 
functional $\Reg_{[\d]}^1$ has compact support. In the concrete implementation we use a (discrete) mollifier, 
which has non-zero entries on only either one, two or three neighboring pixels \emph{in each direction} around 
the center. We denote the number of non-zero elements by $n_{\rho}$. For an illustration see \autoref{fig:nrho}.
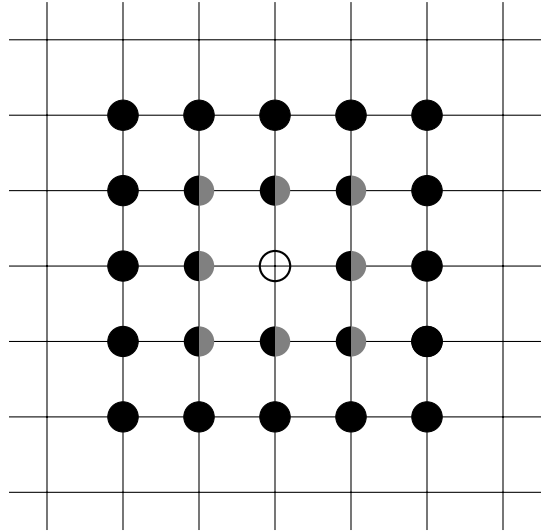
\begin{figure}[h]
	\begin{center}
		\begin{tikzpicture}
		\draw(-3.5,-3.5) grid (3.5,3.5);
		%middle point
		%\filldraw[lightgray](0,0)circle[radius=0.2];
		\draw[thick] (0,0) circle (0.2);
		% distance 1
		\fill[black] (1,1) + (0, 0.2) arc (90:270:0.2);
		\fill[gray] (1,1) + (0, -0.2) arc (270:450:0.2);
		
		\fill[black] (1,0) + (0, 0.2) arc (90:270:0.2);
		\fill[gray] (1,0) + (0, -0.2) arc (270:450:0.2);
		
		\fill[black] (1,-1) + (0, 0.2) arc (90:270:0.2);
		\fill[gray] (1,-1) + (0, -0.2) arc (270:450:0.2);
		
		\fill[black] (0,-1) + (0, 0.2) arc (90:270:0.2);
		\fill[gray] (0,-1) + (0, -0.2) arc (270:450:0.2);
		
		\fill[black] (-1,-1) + (0, 0.2) arc (90:270:0.2);
		\fill[gray] (-1,-1) + (0, -0.2) arc (270:450:0.2);
		
		\fill[black] (-1,0) + (0, 0.2) arc (90:270:0.2);
		\fill[gray] (-1,0) + (0, -0.2) arc (270:450:0.2);
		
		\fill[black] (-1,1) + (0, 0.2) arc (90:270:0.2);
		\fill[gray] (-1,1) + (0, -0.2) arc (270:450:0.2);
		
		\fill[black] (0,1) + (0, 0.2) arc (90:270:0.2);
		\fill[gray] (0,1) + (0, -0.2) arc (270:450:0.2);
		%distance 2
		
		\filldraw[black](0,2)circle[radius=0.2];
		\filldraw[black](1,2)circle[radius=0.2];
		\filldraw[black](2,2)circle[radius=0.2];
		\filldraw[black](2,1)circle[radius=0.2];
		\filldraw[black](2,0)circle[radius=0.2];
		\filldraw[black](2,-1)circle[radius=0.2];
		\filldraw[black](2,-2)circle[radius=0.2];
		\filldraw[black](1,-2)circle[radius=0.2];
		\filldraw[black](0,-2)circle[radius=0.2];
		\filldraw[black](-1,-2)circle[radius=0.2];
		\filldraw[black](2,-1)circle[radius=0.2];
		\filldraw[black](-2,-2)circle[radius=0.2];
		\filldraw[black](-2,-1)circle[radius=0.2];
		\filldraw[black](-2,0)circle[radius=0.2];
		\filldraw[black](-2,1)circle[radius=0.2];
		\filldraw[black](-2,2)circle[radius=0.2];
		\filldraw[black](-1,2)circle[radius=0.2];
		\end{tikzpicture}
		\caption{Support of the discrete mollifier $\rho$ with $n_{\rho} = 1$ (gray) and $n_{\rho} = 2$ (black) around the center point.}
		\label{fig:nrho}
	\end{center}
\end{figure}

In the following we use the subsequent projection operators for backprojection of functions with values in $\R^m$ onto $K$. 

\begin{definition}
 \begin{itemize}
  \item Let $K = \mathcal{B}_r(0)\setminus \mathcal{B}^\circ_\varepsilon(0) \subseteq \R^2$, then we consider the associated \emph{projection operator} 
  \begin{equation}\label{eq:ProjectionOperator}
   \begin{aligned}
    P: \R^2 &\to K, \qquad 
    x &\mapsto 
    \begin{cases}
        \frac{\varepsilon}{\enorm{x}} x & \text{if } 0 < \enorm{x} < \varepsilon \\
    	x & \text{if } \varepsilon \leq \enorm{x} \leq r \\
    	\frac{r}{\enorm{x}} x & \text{if } r < \enorm{x}  %\\	
    	%\frac{\varepsilon}{\enorm{x^{prev}}} x^{prev} & \text{if } 0 = \enorm{x}
    \end{cases}.
   \end{aligned}
  \end{equation}
  %Here, $x^{prev}$ denotes the value of $x$ before updating.
  \item If $K=\sphere$ is represented by its (scaled) angle $\theta \in [0,1)$
  we define   
  \begin{equation}\label{eq:ProjectionOperatorII}
   \begin{aligned}
    \bar{P}: \R &\to [0,1), \qquad  \theta \mapsto \theta \text{ modulo } 1.
   \end{aligned}
\end{equation}
 \end{itemize}
 \end{definition}

  Let us first describe the synthetic data generation, the quality measures for comparing different methods, and alternative comparison methods:
  \subsection*{Synthetic data generation} \label{ss:syn}
  We simulate noisy data $\vd$ by calculating the Radon and ray transform, respectively, of an ideal input data 
  $w^\dagger$ using $\mathtt{Matlabs}$ built-in function $\mathtt{radon}$ for each component and then add Gaussian noise 
  of a certain variance to each component of its sinogram.

 \subsection*{Comparison method}
  We compare the minimizers of the functional $\F_{[d]}^{\alpha,\vd}$ with the ones obtained by (vectorial) Sobolev semi-norm 
  regularization of order $p$. That is we compare the results with minimizers of the functional 
  \begin{equation} \label{eq:VSN}
   \F_{\text{VSN}}(w) \defeq \int_\Sigma \enorm{\op[w](r,\varphi)- \vd(r,\varphi)}^p \drp + \beta \int \limits_{\Omega} \enorm{\nabla w(x)}_F^p \dx
  \end{equation}
  over $w \in W^{s,p}(\Omega;\R^m)$ (see \cite[Theorem 2.4]{DiNPalVal12}).
  Note that here we do not take into account that the values of the functions $w$ lie in some set $K$. 
 
 \subsection*{Quality measure}
 As a measure of quality of reconstruction we computed the \textit{signal-to-noise-ratio (snr)}, which is defined as
 \begin{equation*}
	snr = 20 \log \bigg( \frac{\enorm{w^\dagger}_2}{\enorm{w^\dagger-w_{rec}}_2} \bigg),
 \end{equation*}
 where $w^\dagger$ and $w_{rec}$ denote the (discrete) ground truth and reconstructed data.
\bigskip\par\noindent
Now, we come to the particular numerical examples:
%- - - - - - - - - - - - - - - - - - - - - - - - - - - - - - - - - - - - - - - - - - - - - - - - - - - - - -
\subsection{Reconstruction of normalized vector fields from Radon data}
\label{subsec:Ex1}
%- - - - - - - - - - - - - - - - - - - - - - - - - - - - - - - - - - - - - - - - - - - - - - - - - - - - - -
The first example concerns the reconstruction of a normalized vector field function $W^{s,p}(\Omega;\sphere) \subseteq W^{s,p}(\Omega;\R^2), \Omega \subset \R^2,$ from Radon data: More precisely, we consider the equation 
\begin{equation*}
 \op[w] = \radon[w] = v, 
\end{equation*}
where $w \in W^{s,p}(\Omega;\sphere)$ and $v \in L^p(\Sigma;\R^2)$.

We assume that noisy data $v^\delta \in L^p(\Sigma;\R^2)$ of the true sinogram data $v \in L^p(\Sigma;\R^2)$ are available.
In the numerical test $v^\delta$ are synthetic data, which was generated by adding Gaussian noise with variance $\sigma^2 = 3$
to both components of the discrete approximations of $v$. These data are shown in \autoref{sfig:sfig2-d} and \autoref{sfig:sfig2-e}. 

As described in \autoref{subsec: S1data}, $w \in W^{s,p}(\Omega;\sphere)$ can be represented by a function $u \in W^{s,p}(\Omega;\R)$ if 
$sp < 1$ or $sp \geq 2$. Instead of minimizing $\F_{[\dS]}^{\alpha,\vd}$ we are minimizing the lifted functional $\tilde{\F}_{[\dS]}^{\alpha,\vd}$
as introduced in \autoref{eq:liftedFunc}. This change of formulation is justified by \autoref{le:equiv}.

The discrete ground truth image (representing a function in $W^{s,p}(\Omega;\sphere)$ via its angle) of size $100 \times 100$ is shown in \autoref{sfig:sfig2-a}. 
For visualization we used the gray-scaled-$\mathtt{jet}$ colormap provided in $\mathtt{Matlab}$, where the gray values of function values 
$0$ and $1$ are identified assuming that the angles are scaled to $[0,1)$.

The reconstructed image using the lifted metric double integral regularization, \autoref{eq:liftedFunc}, can be seen in \autoref{sfig:sfig2-b}. 
For the Sobolev semi-norm reconstruction, \autoref{eq:VSN}, see  \autoref{sfig:sfig2-c}.
We observe significant noise reduction in both cases. 
However, our regularization method just regularizes the angle respecting the periodicity of the data correctly.

\begin{figure}[!h]
	\centering
	\begin{subfigure}[h]{0.32\linewidth}
		\includegraphics[width=1\linewidth]{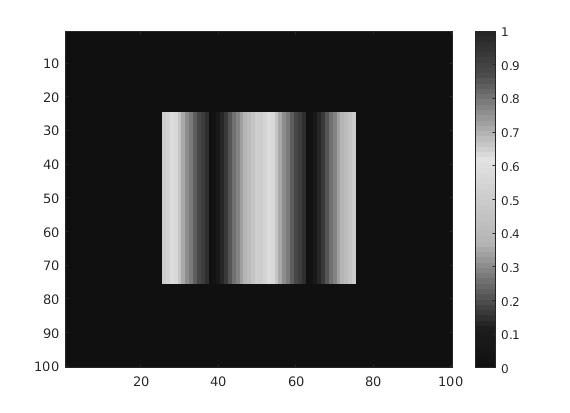}
		\caption{Original data $u$.\newline}
		\label{sfig:sfig2-a}
	\end{subfigure}
	\begin{subfigure}[h]{0.32\linewidth}
		\includegraphics[width=1\linewidth]{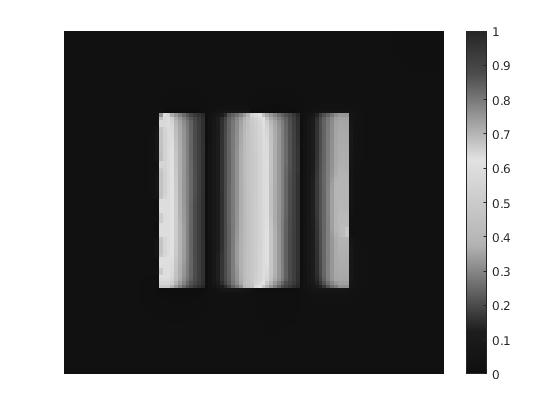}
		\caption{Result with metric double integral regularization, $snr = 23.64$.}
		\label{sfig:sfig2-b}
	\end{subfigure}
	\begin{subfigure}[h]{0.32\linewidth}
		\includegraphics[width=1\linewidth]{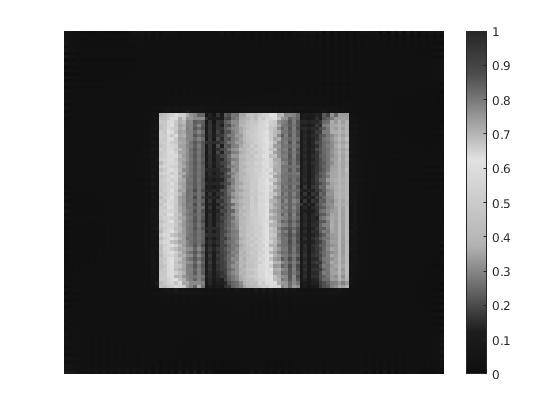}
		\caption{Sobolev semi-norm reconstruction, $snr = 18.70$.}
		\label{sfig:sfig2-c}
	\end{subfigure}
	
	\begin{subfigure}[h]{0.32\linewidth}
		\includegraphics[width=1\linewidth]{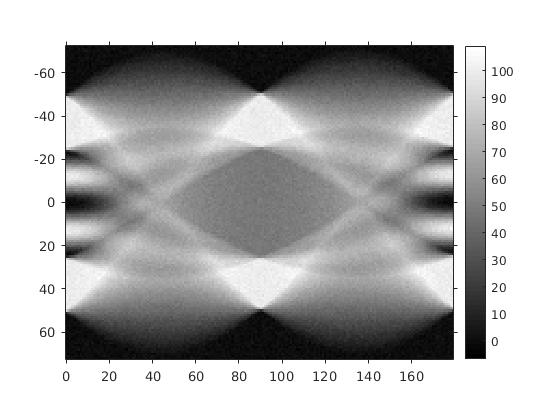}
		\caption{Noisy sinogram; first component of $v^\delta$. \newline}
		\label{sfig:sfig2-d}
	\end{subfigure}
	\begin{subfigure}[h]{0.32\linewidth}
		\includegraphics[width=1\linewidth]{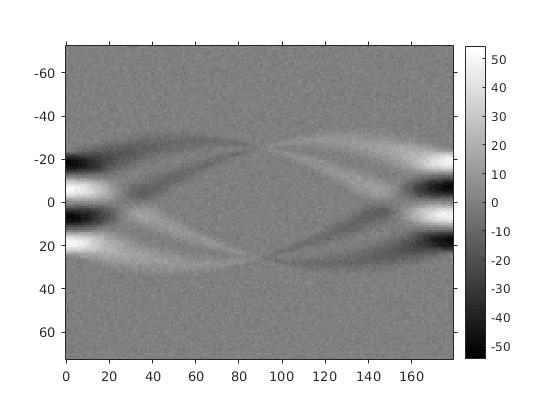}
		\caption{Noisy sinogram; second component of $v^\delta$. \newline}
		\label{sfig:sfig2-e}
	\end{subfigure}	
	\begin{subfigure}[h]{0.32\linewidth}
		\includegraphics[width=1\linewidth]{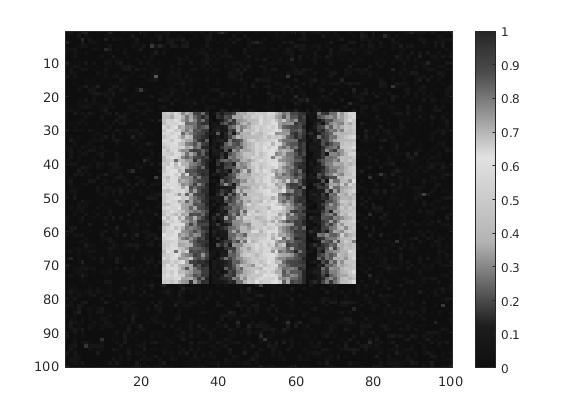}
		\caption{Starting image. \newline \quad \newline}
		\label{sfig:sfig2-f}
	\end{subfigure}
	\caption{First row: Original image. Result with metric double integral regularization with $p=1.1, s= 0.9, \alpha = 0.8, n_{\rho} = 1$. Sobolev semi-norm reconstruction with $p=1.1, \beta = 0.8$. Second row: Both components of the noisy sinogram data $v^\delta$. As starting function of our 
    iterative steepest descent algorithm we used the ground truth data perturbed by additive Gaussian noise with variance $\sigma^2 = 0.003$, see \autoref{sfig:sfig2-f}.}
	\label{fig:fig2}
\end{figure}
\clearpage

%- - - - - - - - - - - - - - - - - - - - - - - - - - - - - - - - - - - - - - - - - - - - - - - - - - - - - -
\subsection{Vector field reconstruction from ray data}
%- - - - - - - - - - - - - - - - - - - - - - - - - - - - - - - - - - - - - - - - - - - - - - - - - - - - - -
The second example consists of three numerical tests for reconstructing $2D$ vector field functions in $W^{s,p}(\Omega;K)$ with 
$K = \mathcal{B}_r(0)\setminus \mathcal{B}^\circ_\varepsilon(0) \subseteq \R^2, \Omega \subset \R^2,$ from ray sinogram data. To be precise, the respective goal is to reconstruct solutions 
of the operator equation 
\begin{equation*}
\ray[w] = v, 
\end{equation*}
for $w \in W^{s,p}(\Omega;K)$ given some $v \in L^p(\Sigma;\R)$. 
Again we assume that only noisy data $\vd \in L^p(\Sigma;\R)$ are available, which in our tests was generated synthetically 
by adding Gaussian noise to the ideal data $\ray[w^\dagger]$. 

We thus minimize the regularization functional $\F_{[d]}^{\alpha,\vd}$ as defined in \autoref{eq:reg} over $W^{s,p}(\Omega;K)$ with $\d$ 
as defined as in \autoref{eq:vfMetrik} and $\op = \ray$. The regularization technique is well-defined as documented in \autoref{subsec:vfdata}.

We use $p=2$, different values of $r$ 
and different regularization parameters for the angle and the length which we denote by $\alpha$ and $\alpha \gamma$, see \autoref{eq:vfMetrik}. In 
the definition of $K = \mathcal{B}_r(0)\setminus \mathcal{B}^\circ_\varepsilon(0)$ we used as value for $\varepsilon$ the floating point relative accuracy $\mathtt{eps}$ in $\mathtt{Matlab}$ .

The {\bf first numerical test} concerns a vector field function 
$w^\dagger \in W^{s,p}(\Omega;\mathcal{B}_{0.1}(0) \setminus \mathcal{B}^\circ_\varepsilon(0) ) $, which is represented in \autoref{sfig:sfig3-c}. 
The length of the vectors of $w^\dagger$ vary between $0.01$ and $0.1$. The corresponding sinogram can be seen in \autoref{sfig:sfig3-a} 
to which we added Gaussian noise with variance $0.05$ to get our synthetic test data, see \autoref{sfig:sfig3-b}. 
In this example varying the angle has much more influence than varying the length, so it seems reasonable to regularize angle and length separately. 
The reconstructed vector fields using the metric double integral regularization with $d$ as defined in \autoref{eq:vfMetrik} confirm this, see \autoref{sfig:sfig3-d} and \autoref{sfig:sfig3-h}.

Using the Sobolev semi-norm regularization, \autoref{eq:VSN}, with a small parameter $\beta$ the result is much worse with respect to the 
$snr$-value, see \autoref{sfig:sfig3-f}. Increasing $\beta$ leads to a reduction of length near the jump of the angle, see \autoref{sfig:sfig3-j}.

The {\bf second numerical test} concerns reconstruction of a vector field function which has normalized length 1; it is shown in \autoref{sfig:sfig4-c}. 
We added Gaussian noise with variance $10$ to its sinogram, cf. \autoref{sfig:sfig4-a} and \autoref{sfig:sfig4-b}, 
and chose $r=1$.\\
Small regularization parameters lead to bad results in both cases since quite noisy sinogram data are given, see \autoref{sfig:sfig4-d} and \autoref{sfig:sfig4-f}.  
When increasing the parameters the advantage of using $\d$ as in \autoref{eq:vfMetrik} in contrast to the vectorial Sobolev semi-norm gets more obvious:   
Using our regularization method the resulting vectors stay close to the ideal data, see \autoref{sfig:sfig4-h}.
There are differences in the middle of the field which can be explained because of the separate minimization with respect to the angle. 
Increasing the regularization parameter and using $\F_{\text{VSN}}$ in \autoref{eq:VSN}
leads to a smoothing of the whole field, see \autoref{sfig:sfig4-j}. It is not possible to get a reconstructed field which preserves the jumps or length. This is due to the fact that the Sobolev semi-norm regularization does not decompose the vector into angle and length.

As a {\bf last example} we reconstruct the normalized vector field function 
seen in \autoref{sfig:sfig5-c} and use again $r=1$. Gaussian noise with variance $5$ is added to its sinogram, 
see \autoref{sfig:sfig5-a} and \autoref{sfig:sfig5-b}. 
Small regularization parameters lead to good results in both cases, see \autoref{sfig:sfig5-d} and \autoref{sfig:sfig5-f}, although noise is still visible. 
As in the second example, our regularization with a proper choice of parameters leads the resulting vectors to stay close to the ideal data, see \autoref{sfig:sfig5-h}, even in the area of the curl. 
This is not possible minimizing $\F_{\text{VSN}}$, see \autoref{sfig:sfig5-j}. Here, in contrast, we can observe significant length reduction near the curl.

\begin{figure}[!h]
	\centering
	\begin{subfigure}[h]{0.27\linewidth}
		\includegraphics[width=1\linewidth]{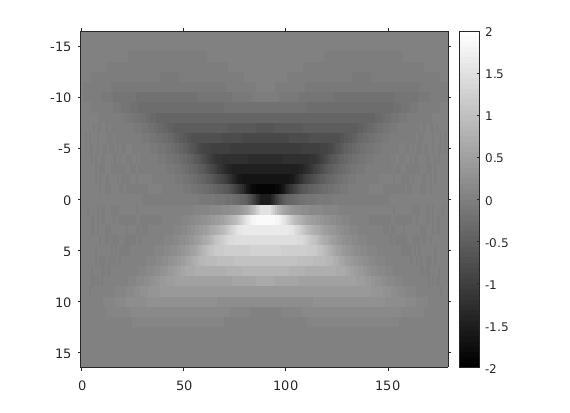}
		\caption{Sinogram of original data, $v^0$. }
		\label{sfig:sfig3-a}
	\end{subfigure}
	%\hspace{1.5cm}
	\begin{subfigure}[h]{0.27\linewidth}
		\includegraphics[width=1\linewidth]{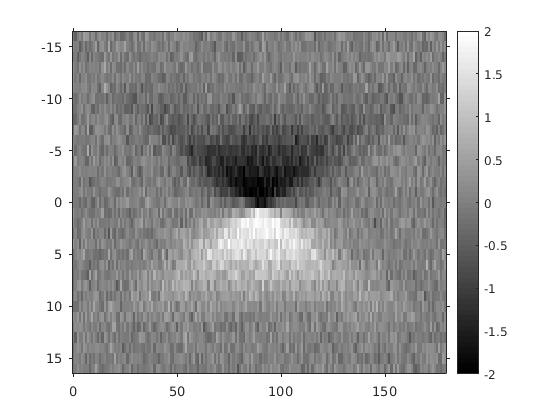}
		\caption{Noisy sinogram $\vd$, $\sigma^2 = 0.05$.}
		\label{sfig:sfig3-b}
	\end{subfigure}	
	\begin{subfigure}[h]{0.27\linewidth}
		\includegraphics[width=1\linewidth]{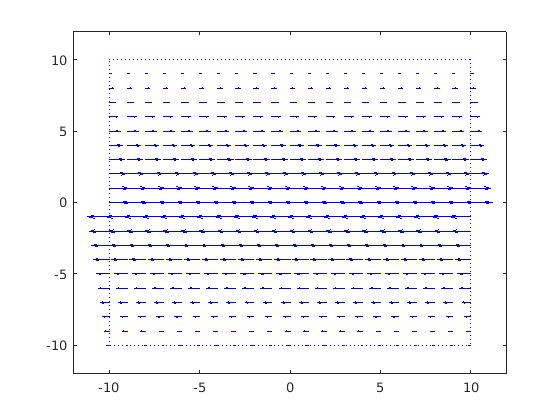}
		\caption{Original vector field $w^\dagger$. }
		\label{sfig:sfig3-c}
	\end{subfigure}		
	
	\begin{subfigure}[h]{0.27\linewidth}
		\includegraphics[width=1\linewidth]{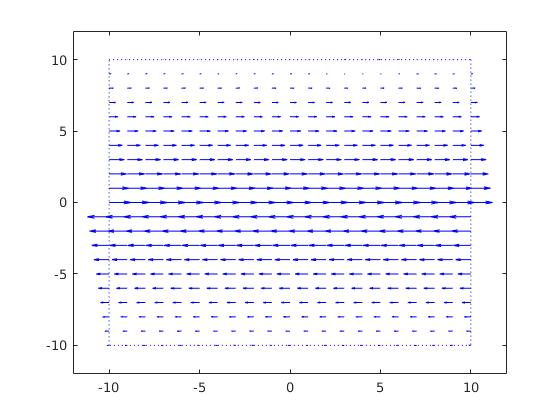}
		\caption{Result with metric double integral regularization, $\alpha = 0.01, \gamma = 1, \ snr = 35.61$.}
		\label{sfig:sfig3-d}
	\end{subfigure}
	\hspace{1.5cm}
	\begin{subfigure}[h]{0.27\linewidth}
		\includegraphics[width=1\linewidth]{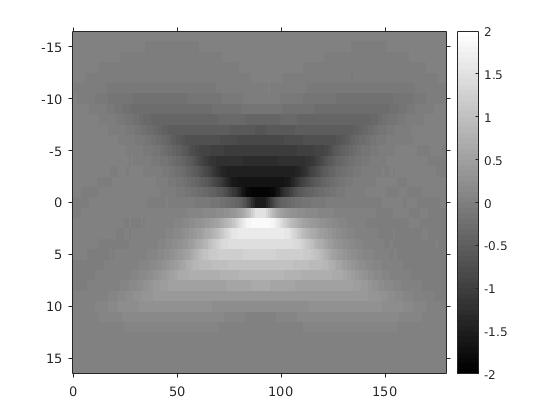}
		\caption{Sinogram using $\alpha = 0.01, \gamma = 1$.\newline }
		\label{sfig:sfig3-e}
	\end{subfigure}

	\begin{subfigure}[h]{0.27\linewidth}
		\includegraphics[width=1\linewidth]{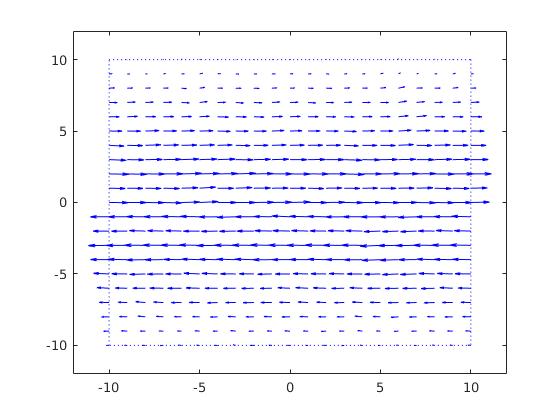}
		\caption{Reconstruction with the quadratic Sobolev semi-norm, $\beta = 0.01, \ snr = 14.42$.}
		\label{sfig:sfig3-f}
	\end{subfigure}
	\hspace{1.5cm}
	\begin{subfigure}[h]{0.27\linewidth}
		\includegraphics[width=1\linewidth]{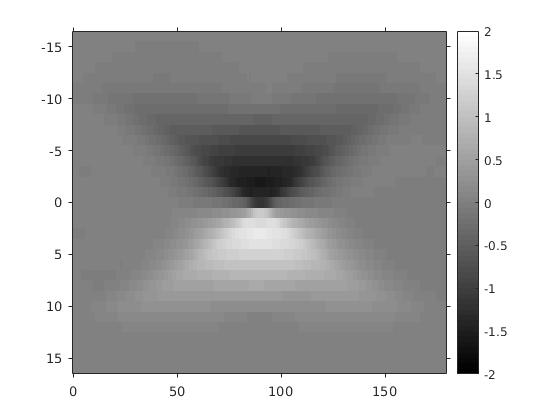}
		\caption{Sinogram using $\beta = 0.01$.\newline \quad \newline}
		\label{sfig:sfig3-g}
	\end{subfigure}

    \begin{subfigure}[h]{0.27\linewidth}
    	\includegraphics[width=1\linewidth]{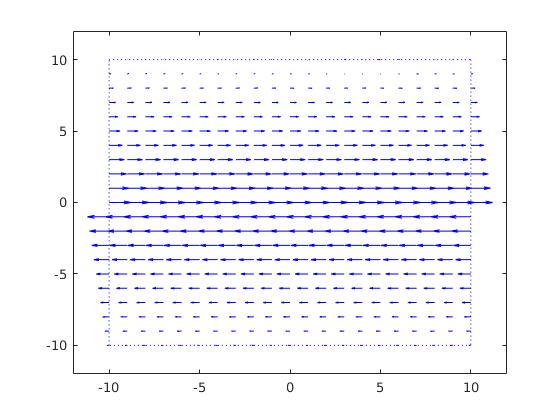}
    	\caption{Result with metric double integral regularization, $\alpha = 0.1, \gamma = 0.1, \ snr = 33.67$.}
    	\label{sfig:sfig3-h}
    \end{subfigure}
	\hspace{1.5cm}
    \begin{subfigure}[h]{0.27\linewidth}
    	\includegraphics[width=1\linewidth]{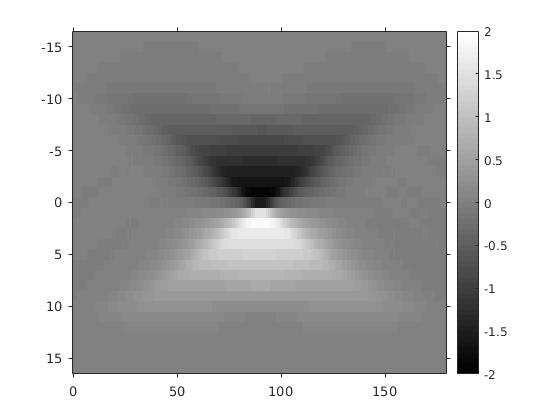}
    	\caption{Sinogram using $\alpha = 0.1, \gamma = 0.1$.\newline }
    	\label{sfig:sfig3-i}
    \end{subfigure}
	
	\begin{subfigure}[h]{0.27\linewidth}
		\includegraphics[width=1\linewidth]{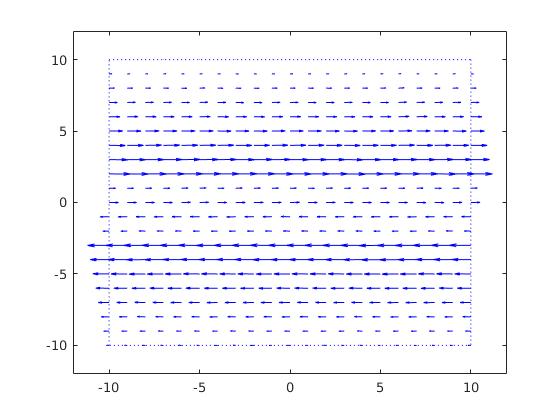}
		\caption{Reconstruction with the quadratic Sobolev semi-norm, $\beta = 0.1, \ snr = 6.51$.}
		\label{sfig:sfig3-j}
	\end{subfigure}
	\hspace{1.5cm}
	\begin{subfigure}[h]{0.27\linewidth}
		\includegraphics[width=1\linewidth]{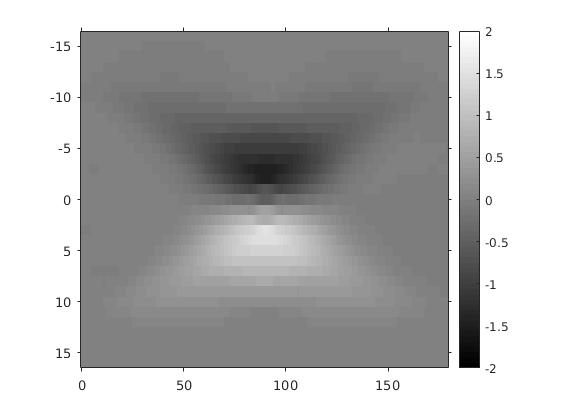}
		\caption{Sinogram using $\beta = 0.1$. \newline \quad \newline}
		\label{sfig:sfig3-k}
	\end{subfigure}
	\caption{\small Reconstruction of a vector field for fixed $p=2, s=0.49$ and $n_\rho=2$ with different regularization parameters $\alpha, \gamma$ and $\beta$.}
	\label{fig:fig3}
\end{figure}

\begin{figure}[!h]
	\centering
	\begin{subfigure}[h]{0.27\linewidth}
		\includegraphics[width=1\linewidth]{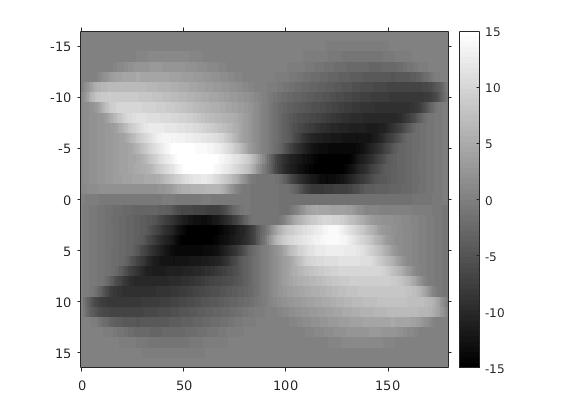}
		\caption{Sinogram of original data, $v^0$.}
		\label{sfig:sfig4-a}
	\end{subfigure}
	%\hspace{1.5cm}
	\begin{subfigure}[h]{0.27\linewidth}
		\includegraphics[width=1\linewidth]{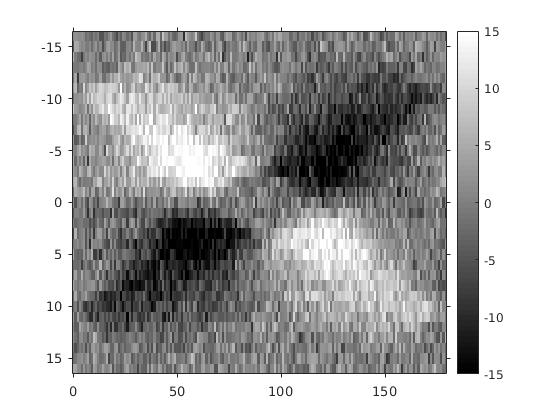}
		\caption{Noisy sinogram $\vd$, $\sigma^2 = 10$.}
		\label{sfig:sfig4-b}
	\end{subfigure}	
	\begin{subfigure}[h]{0.27\linewidth}
		\includegraphics[width=1\linewidth]{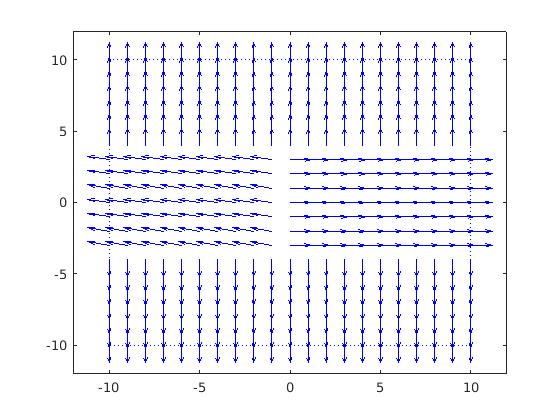}
		\caption{Original vector field $w^\dagger$. }
		\label{sfig:sfig4-c}
	\end{subfigure}		
	
	\begin{subfigure}[h]{0.27\linewidth}
		\includegraphics[width=1\linewidth]{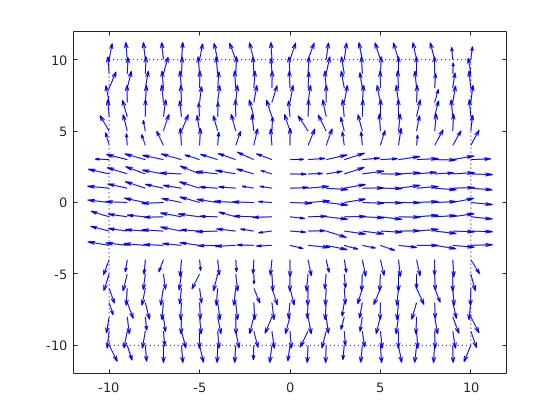}
		\caption{Result with metric double integral regularization, $\alpha = 0.001, \gamma = 1, \ snr = 14.96$.}
		\label{sfig:sfig4-d}
	\end{subfigure}
	\hspace{1.5cm}
	\begin{subfigure}[h]{0.27\linewidth}
		\includegraphics[width=1\linewidth]{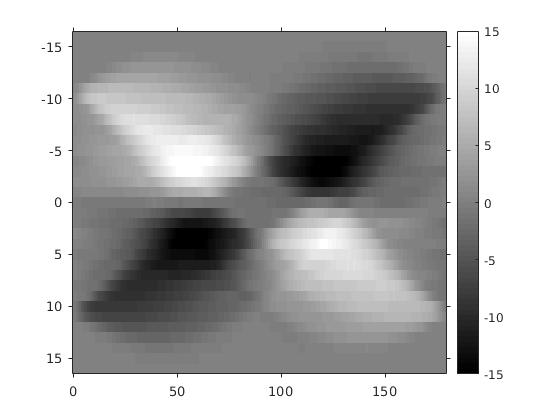}
		\caption{Sinogram using $\alpha = 0.001, \gamma = 1$.\newline }
		\label{sfig:sfig4-e}
	\end{subfigure}
	
	\begin{subfigure}[h]{0.27\linewidth}
		\includegraphics[width=1\linewidth]{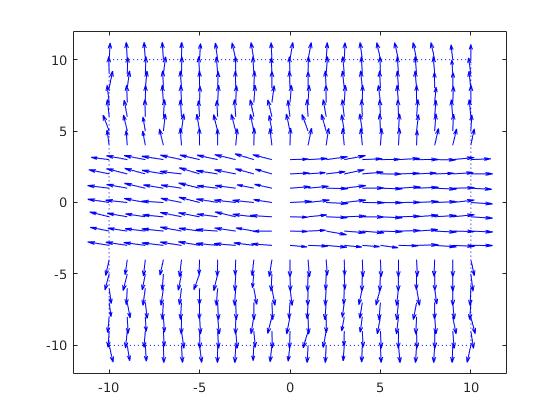}
		\caption{Reconstruction with the quadratic Sobolev semi-norm, $\beta = 0.001, \ snr = 19.83$.}
		\label{sfig:sfig4-f}
	\end{subfigure}
	\hspace{1.5cm}
	\begin{subfigure}[h]{0.27\linewidth}
		\includegraphics[width=1\linewidth]{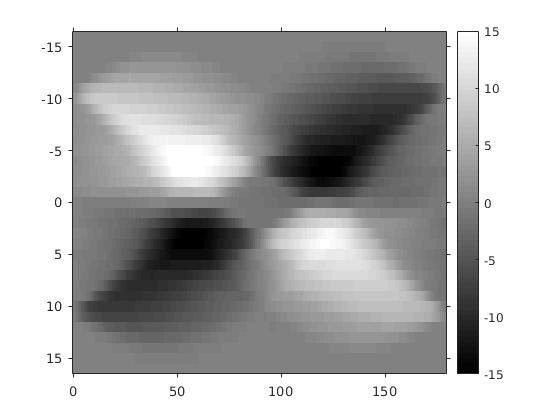}
		\caption{Sinogram using $\beta = 0.001$.\newline \quad \newline}
		\label{sfig:sfig4-g}
	\end{subfigure}
	
	\begin{subfigure}[h]{0.27\linewidth}
		\includegraphics[width=1\linewidth]{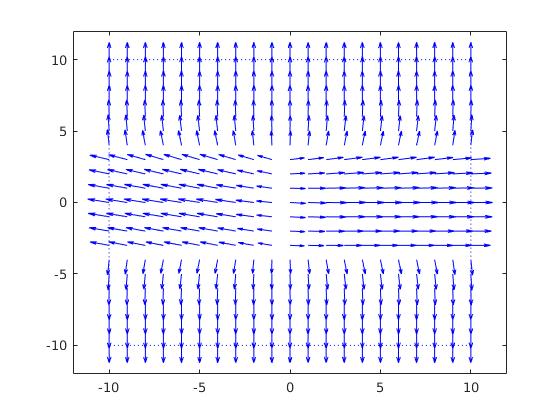}
		\caption{Result with metric double integral regularization, $\alpha = 0.1, \gamma = 4, \ snr = 18.08$.}
		\label{sfig:sfig4-h}
	\end{subfigure}
	\hspace{1.5cm}
	\begin{subfigure}[h]{0.27\linewidth}
		\includegraphics[width=1\linewidth]{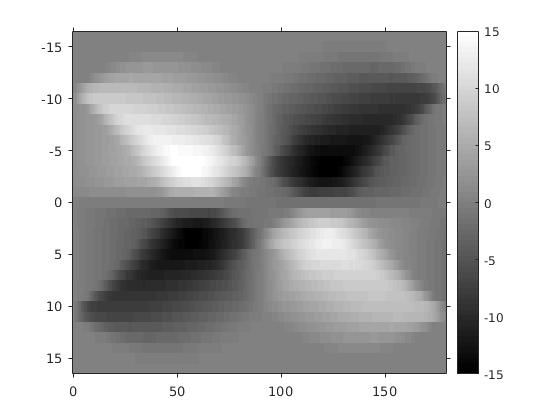}
		\caption{Sinogram using $\alpha = 0.1, \gamma = 4$. \newline \quad \newline}
		\label{sfig:sfig4-i}
	\end{subfigure}

	\begin{subfigure}[h]{0.27\linewidth}
		\includegraphics[width=1\linewidth]{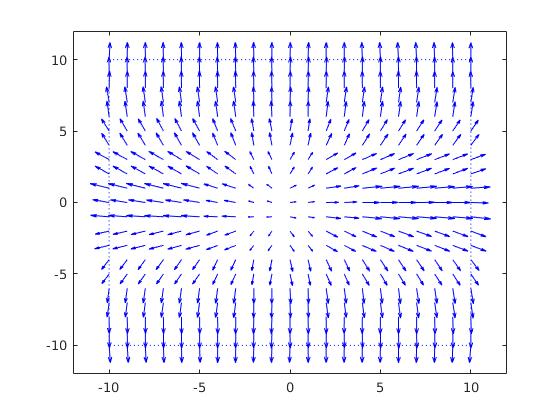}
		\caption{Reconstruction with the quadratic Sobolev semi-norm, $\beta = 0.1, \ snr = 7.72$.}
		\label{sfig:sfig4-j}
	\end{subfigure}
	\hspace{1.5cm}
	\begin{subfigure}[h]{0.27\linewidth}
		\includegraphics[width=1\linewidth]{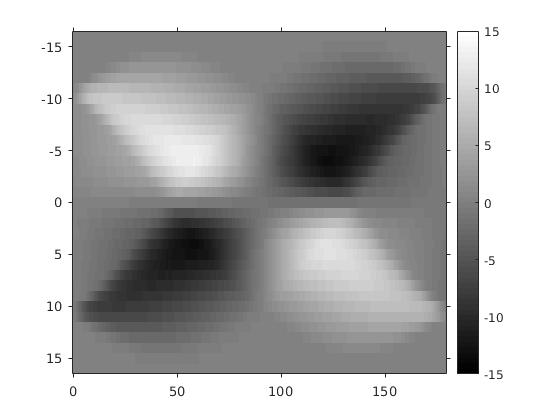}
		\caption{Sinogram using $\beta = 0.1$. \newline \quad \newline}
		\label{sfig:sfig4-k}
	\end{subfigure}   
 
	\caption{\small Reconstruction of a vector field for fixed $p=2, s=0.49$ and $n_\rho=3$ with different regularization parameters $\alpha, \gamma$ and $\beta$.}
	\label{fig:fig4}
\end{figure}

\begin{figure}[!h]
	\centering
	\begin{subfigure}[h]{0.27\linewidth}
		\includegraphics[width=1\linewidth]{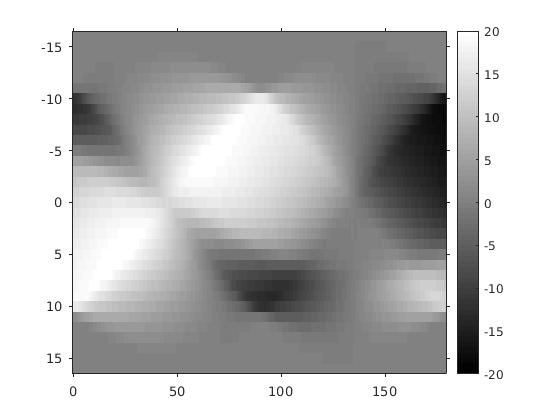}
		\caption{Sinogram of original data, $v^0$.}
		\label{sfig:sfig5-a}
	\end{subfigure}
	%\hspace{1.5cm}
	\begin{subfigure}[h]{0.27\linewidth}
		\includegraphics[width=1\linewidth]{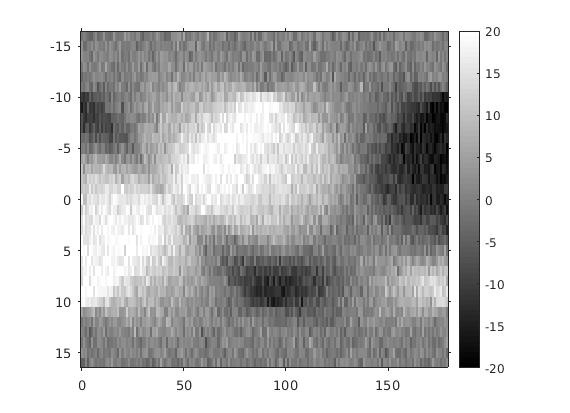}
		\caption{Noisy sinogram $\vd$, $\sigma^2 = 5$. }
		\label{sfig:sfig5-b}
	\end{subfigure}	
	\begin{subfigure}[h]{0.27\linewidth}
		\includegraphics[width=1\linewidth]{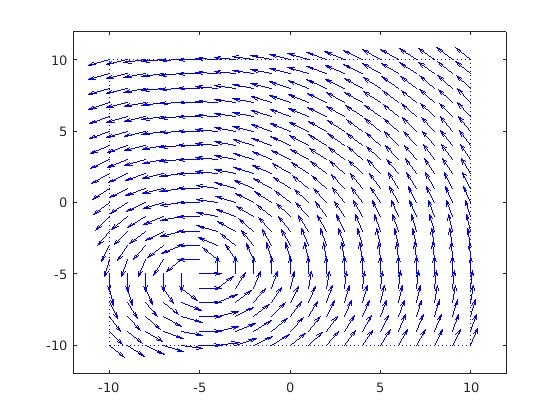}
		\caption{Original vector field $w^\dagger$. }
		\label{sfig:sfig5-c}
	\end{subfigure}		
	
	\begin{subfigure}[h]{0.27\linewidth}
		\includegraphics[width=1\linewidth]{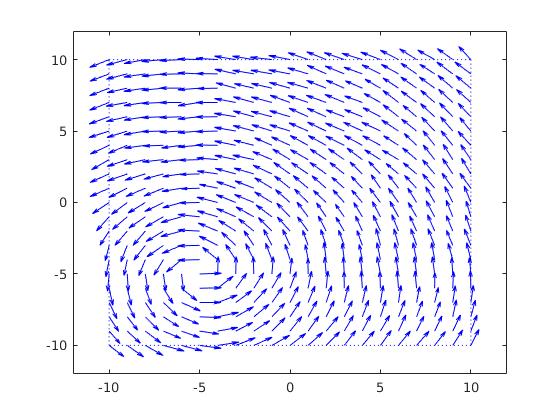}
		\caption{Result with metric double integral regularization, $\alpha = 0.001, \gamma = 1, \ snr = 20.71$.}
		\label{sfig:sfig5-d}
	\end{subfigure}
    \hspace{1.5cm}
	\begin{subfigure}[h]{0.27\linewidth}
		\includegraphics[width=1\linewidth]{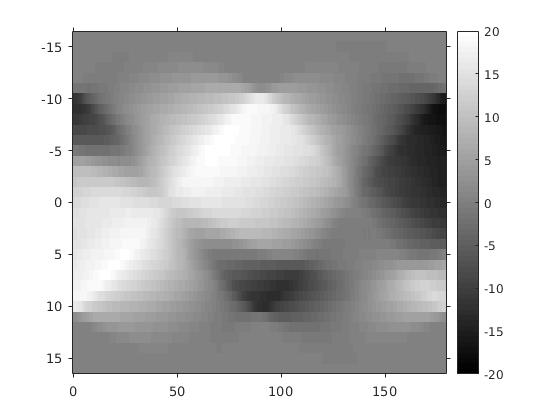}
		\caption{Sinogram using $\alpha = 0.001, \gamma = 1$. \newline}
		\label{sfig:sfig5-e}
	\end{subfigure}
	
	\begin{subfigure}[h]{0.27\linewidth}
		\includegraphics[width=1\linewidth]{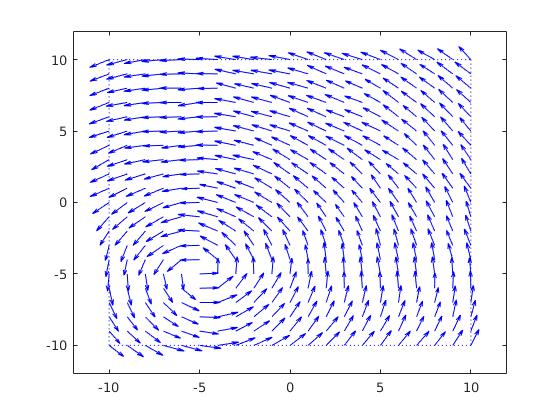}
		\caption{Reconstruction with the quadratic Sobolev semi-norm, $\beta = 0.001, \ snr = 20.44$.}
		\label{sfig:sfig5-f}
	\end{subfigure}
	\hspace{1.5cm}
	\begin{subfigure}[h]{0.27\linewidth}
		\includegraphics[width=1\linewidth]{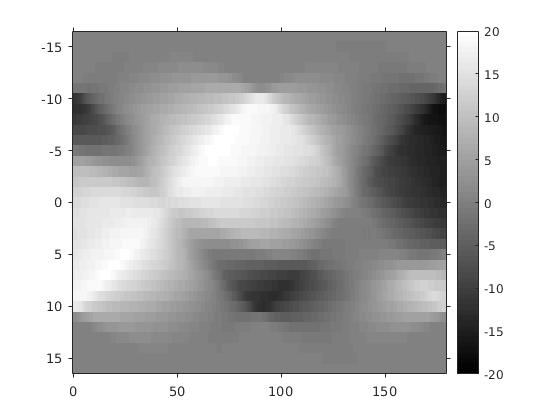}
		\caption{Sinogram using $\beta = 0.001$.\newline \quad \newline}
		\label{sfig:sfig5-g}
	\end{subfigure}
	
	\begin{subfigure}[h]{0.27\linewidth}
		\includegraphics[width=1\linewidth]{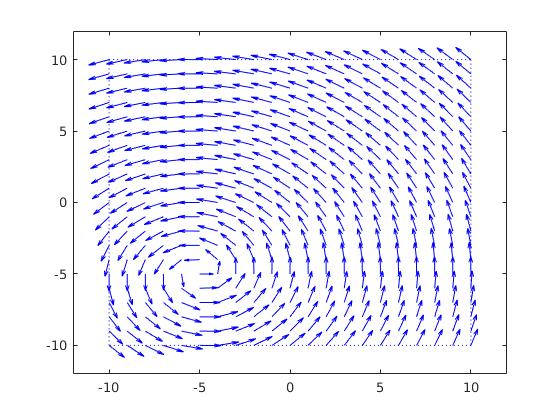}
		\caption{Result with metric double integral regularization, $\alpha = 0.1, \gamma = 3, \ snr = 27.57$.}
		\label{sfig:sfig5-h}
	\end{subfigure}
	\hspace{1.5cm}
	\begin{subfigure}[h]{0.27\linewidth}
		\includegraphics[width=1\linewidth]{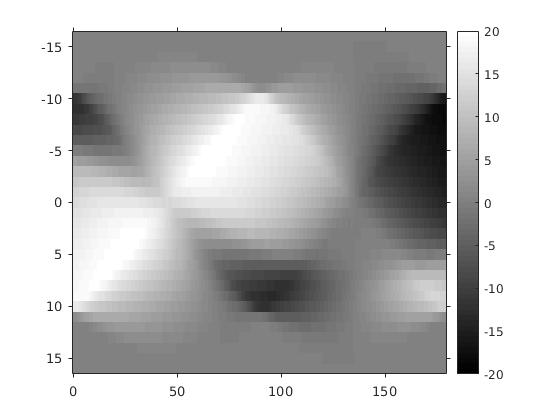}
		\caption{Sinogram using $\alpha = 0.1, \gamma = 3$.\newline \quad \newline}
		\label{sfig:sfig5-i}
	\end{subfigure}
	
	\begin{subfigure}[h]{0.27\linewidth}
		\includegraphics[width=1\linewidth]{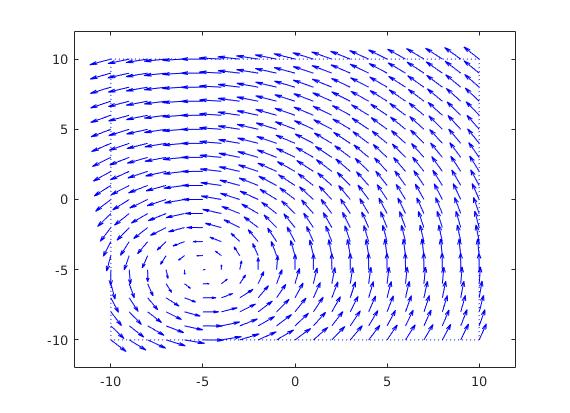}
		\caption{Reconstruction with the quadratic Sobolev semi-norm, $\beta = 0.1, \ snr = 15.91$.}
		\label{sfig:sfig5-j}
	\end{subfigure}
	\hspace{1.5cm}
	\begin{subfigure}[h]{0.27\linewidth}
		\includegraphics[width=1\linewidth]{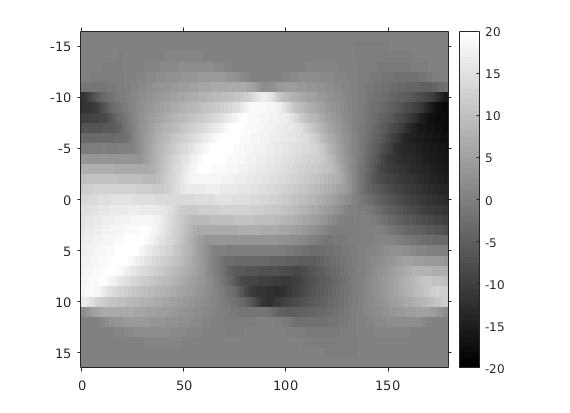}
		\caption{Sinogram using $\beta = 0.1$. \newline \quad \newline}
		\label{sfig:sfig5-k}
	\end{subfigure}    
	
	\caption{\small Reconstruction of a vector field for fixed $p=2, s=0.49$ and $n_\rho=1$ with different regularization parameters $\alpha, \gamma$ and $\beta$.}
	\label{fig:fig5}
\end{figure}

\clearpage

%- - - - - - - - - - - - - - - - - - - - - - - - - - - - - - - - - - - - - - - - - - - - - - - - - - - - - -
\section{Conclusion}
%- - - - - - - - - - - - - - - - - - - - - - - - - - - - - - - - - - - - - - - - - - - - - - - - - - - - - -
The contribution of this paper is the application of recently developed regularization methods for recovering 
functions with values in a closed set, typically an embedded sub-manifold, to vector tomographic imaging problems 
for the Radon and ray transform, respectively. These regularization methods have been investigated so far exclusively 
for image analysis problems, such as denoising and inpainting. 

%- - - - - - - - - - - - - - - - - - - - - - - - - - - - - - - - - - - - - - - - - - - - - - - - - - - - - -
\subsection*{Acknowledgements}
%- - - - - - - - - - - - - - - - - - - - - - - - - - - - - - - - - - - - - - - - - - - - - - - - - - - - - -
The authors acknowledge support from the Austrian Science Fund (FWF) under project I3661-N27 (Novel Error Measures and Source 
Conditions of Regularization Methods for Inverse Problems). Moreover, OS is supported by the Austrian Science Fund (FWF), 
with SFB F68, project F6807-N36 (Tomography with Uncertainties).

%\appendix

%%%%%%%%%%%%%%%%%%%%%%%%%%%%%%
%%% References
%%%%%%%%%%%%%%%%%%%%%%%%%%%%%%
\section*{References}
\renewcommand{\i}{\ii}
\printbibliography[heading=none]
%%% End

\end{document}